\newtheorem{theorem}{Theorem}[section]
\newtheorem{corollary}[theorem]{Corollary}
\theoremstyle{definition}
\newtheorem{definition}[theorem]{Definition}
\theoremstyle{remark}
\newtheorem{example}[theorem]{Example}
\newtheorem{note}[theorem]{Note}
\numberwithin{equation}{section}
\def\pn{\par\noindent}
\begin{document}


\title{Relations on continuities and boundedness in intuitionistic fuzzy pseudo normed linear spaces}
\author{Bivas Dinda, Santanu Kumar Ghosh, T.K. Samanta}


\maketitle

\begin{abstract}
In this study different types of intuitionistic fuzzy continuities (IFCs) and intuitionistic fuzzy boundedness (IFBs) in intuitionistic fuzzy pseudo normed linear spaces are studied. Relations (intra and inter) on intuitionistic fuzzy continuities and intuitionistic fuzzy boundedness are investigated.
\end{abstract}

\vskip 0.2 true cm

{\scriptsize
\hskip -0.4 true cm {\bf Key Words:} Strongly intuitionistic fuzzy continuity, weakly intuitionistic fuzzy continuity, sequentially intuitionistic fuzzy continuity, strongly intuitionistic fuzzy bounded, weakly intuitionistic fuzzy bounded, uniformly intuitionistic fuzzy bounded.}

\pagestyle{myheadings}
\markboth{\rightline {\scriptsize  Bivas Dinda, S.K. Ghosh and T.K. Samanta}}
         {\leftline{\scriptsize Relations on continuities and boundedness in IFPNLS}}

\bigskip
\section{Introduction}
The fuzzy norm concept was originated by A. Katsaras \cite{katsaras1, katsaras2}. Subsequently, this notion swotted by multiple researchers, viz. C. Falbin \cite{felbin}, S.C. Cheng and J. N. Moderson \cite{cheng}, I. Golet \cite{golet} and many others. Chasing the conviction of Cheng-Moderson\cite{cheng}, Bag-Samanta\cite{bag1} considered another definition of fuzzy norm, it became most acceptable among researchers. Motivated by the work of Bag-Samanta \cite{bag1, bag2}, S. N\u ad\u aban \cite{nadaban} introduce the idea of fuzzy pseudo norm.\\
Study of intuitionistic fuzzy normed spaces attracted lots of interest in recent years \cite{dinda1, dinda2, dinda3, dinda4, saadati, samanta}. In particular, Dinda et al. \cite{dinda1} swotted the concept of intuitionistic fuzzy pseudo normed linear spaces and deduced that the concept of of intuitionistic fuzzy pseudo norm is more general set up than intuitionistic fuzzy norm.\\
In this paper, intuitionistic fuzzy continuities and intuitionistic fuzzy boundedness of linear operator are studied in intuitionistic fuzzy pseudo normed spaces, a more general concept of intuitionistic fuzzy normed spaces. In section 3, the concept of intuitionistic fuzzy continuities and intra relation on various types of intuitionistic fuzzy continuities are emphasized. In section 4,various types of intuitionistic fuzzy boundedness are studied. First intra relations on different types of intuitionistic fuzzy boundedness is obtained, then the interrelations on different types of continuities and boundedness are gone into.

\section{Preliminaries}
\begin{definition}\cite{tvsb1}
Let $X$ be a linear space. A function $\|\cdot\|\,:\,X\to\mathbb{R}$ is said to be a pseudo norm on $X$ if the following conditions hold:\\
$(P.1)\, \|x\|\,\geq\,0,\;\forall\,x\in X$;\\
$(P.2)\, \|x\|\,=\,0$ if and only if $x=\theta,\;\forall\,x\in X$;\\ 
$(P.3)\, \|c\,x\|\,\leq\,\|x\|,\;\forall\,x\in X,\;\forall\,c\in\,\mathbb{K}$ with $\,|c|\,\leq\,1$;\\
$(P.4)\, \|x+y\|\,\leq\,\|x\|+\|y\|,\;\forall\,x,y\in X$.
\end{definition}

\begin{definition}\cite{dinda1}\label{d1}
Let $X$ be linear space over the field $\mathbb{K}\,(=\,\mathbb{R}\,/\,\mathbb{C}\,)$. A intuitionistic fuzzy subset $(\mu,\nu)$ of $(X\times\mathbb{R}\,X\times\mathbb{R})$ is said to be an IFPN on $X$ if 
$\forall\,x,y\in\,X$ \\
\textbf{(IFP.1)}$\,\,\mu(\,x\,,\,t\,)\,+\,\nu(\,x\,,\,t\,)\,\leq\,1.$\\
\textbf{(IFP.2)} $\,\,\forall t\,\in \,\mathbb{R}$ with $t\,\leq\,0\,,\,\mu(\,x\,,\,t\,)\,=\,0\,$;\\
\textbf{(IFP.3)} $\,\,\forall t\,\in \,\mathbb{R}^+,\,\mu(\,x\,,\,t\,)\,=\,1\,$ if and only if $x\,=\,\theta$;\\
\textbf{(IFP.4)} $\,\forall t\,\in \,\mathbb{R}^+,\,\mu\,(\,cx\,,\,t\,)\,\geq\,\mu\,(\,x\,,\,t\,)\,$ if $\mid c \mid\,\leq\,1,\,\forall\,c\,\in \mathbb{K}$;\\
\textbf{(IFP.5)}  $\,\mu\,(\,x\,+\,y\,,\,s\,+\,t\,)\,\geq\,\min\,\{\mu\,(\,x\,,\,s\,)\,,\,\mu\,(\,y\,,\,t\,)\,\},\,\,\forall s,\,t\,\in\,\mathbb{R}^+$;\\ 
\textbf{(IFP.6)}  $\mathop{\lim }\limits_{t\, \to\,\,\infty }\,\mu\,(\,x\,,\,t\,)=1.$\\
\textbf{(IFP.7)} if there exists $\,\alpha\,\in\,(0,1)\,$ such that $\,\mu(x\,,\,t)\,>\,\alpha\,,\,\forall\,t\,\in\,\mathbb{R^+}\,$  then $x=\theta$.\\
\textbf{(IFP.8)} $\,\forall\,x\,\in\,X,\,\, \mu(\,x\,,\,\cdot)\,$ is left continuous on $\,\mathbb{R}\,$.\\
\textbf{(IFP.9)} $\,\,\forall t\,\in \,\mathbb{R}$ with $t\,\leq\,0\,,\,\nu(\,x\,,\,t\,)\,=\,1\,$;\\
\textbf{(IFP.10)} $\,\,\forall t\,\in \,\mathbb{R}^+,\,\nu(\,x\,,\,t\,)\,=\,0\,$ if and only if $x\,=\,\theta$;\\
\textbf{(IFP.11)} $\,\forall t\,\in \,\mathbb{R}^+,\,\nu\,(\,cx\,,\,t\,)\leq\,\nu\,(\,x\,,\,t\,)$ if $\mid c \mid\,\leq\,1,\,\forall\,c\,\in \mathbb{K}$;\\
\textbf{(IFP.12)} $\,\nu\,(\,x\,+\,y\,,\,s\,+\,t\,)\,\leq\,\max\,\{\,\nu\,(\,x\,,\,s\,)\,,\,\nu\,(\,y\,,\,t\,)\,\},\,\,\forall s,\,t\,\in\,\mathbb{R}^+$;\\ 
\textbf{(IFP.13)}  $\mathop{\lim }\limits_{t\, \to\,\,\infty }\,\nu\,(\,x\,,\,t\,)=0.$\\
\textbf{(IFP.14)} if there exists $ \alpha\in(0,1)\,$ such that $\,\nu(x\,,\,t)<\,\alpha\,,\,\forall\,t\,\in\,\mathbb{R}^+\,$ then $x=\theta$.\\
\textbf{(IFP.15)} $\,\forall\,x\,\in\,X\,,\,\, \nu(\,x\,,\,\cdot)\,$ is left continuous on $\,\mathbb{R}\,$.
\end{definition}
Here $(X,\mu,\nu)$ is called intuitionistic fuzzy pseudo normed linear space.

\begin{note}\label{pr1}
\cite{gupta} $a\ast\,a=a$ and $a\diamond\,a=a,\,\forall\,a\in\,[0,1]$ is satisfied only when $a\ast\,b=\min\{a,b\}$ and $a\diamond\,b=\max\{a,b\}$.
\end{note}

\begin{definition}\cite{dinda1}
Let $(X,\mu,\nu)$ be an intuitionistic fuzzy pseudo norm linear space. A sequence $\{a_n\}_{n\in\mathbb{N}}$ converges to $a\in\,X$ if and only if $\mathop {\lim }\limits_{n \to \infty }\mu(a_n-a,t)=1$ and $\mathop {\lim }\limits_{n \to \infty }\nu(a_n-a,t)=0$.
\end{definition}

\begin{theorem}\cite{dinda1}\label{th1}
Let $(X,\mu,\nu)$ be a intuitionistic fuzzy pseudo normed linear space. Then for any $\alpha\in(0,1)$ the functions $\left\| {\,x\,} \right\|_{\,\alpha }\,,\,\left\| {\,x\,} \right\|_{\,\alpha }^{\,\ast}:X\,\rightarrow [0,\infty)$ defined as 
\[\hspace{2 cm}\left\| {\,x\,} \right\|_{\,\alpha }\,=\,\bigwedge\,\{\,t>0\,:\,\mu\,(\,x,t\,)\geq\,\alpha\}\] is a ascending family of pseudo norm on $X	$.
\[\hspace{2 cm}\left\| {\,x\,} \right\|_{\,\alpha }^{\,\ast}\,=\,\bigwedge\,\{\,t>0\,:\,\nu\,(\,x,t\,)\leq \,\alpha\}\] is a descending family of pseudo norm on $X$.
\end{theorem}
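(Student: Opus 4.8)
The plan is to verify the four pseudo-norm axioms $(P.1)$–$(P.4)$ for each family separately and then establish the monotonicity of the families in the index $\alpha$. A preliminary fact I will use throughout is that, for fixed $x$, the function $\mu(x,\cdot)$ is non-decreasing and $\nu(x,\cdot)$ is non-increasing in $t$. This follows by putting $y=\theta$ in $(IFP.5)$ and $(IFP.12)$: since $\mu(\theta,t)=1$ and $\nu(\theta,t)=0$ for $t>0$ by $(IFP.3)$ and $(IFP.10)$, we obtain $\mu(x,s+t)\geq\mu(x,s)$ and $\nu(x,s+t)\leq\nu(x,s)$. Together with $(IFP.6)$ and $(IFP.13)$ this monotonicity shows that the defining sets $\{t>0:\mu(x,t)\geq\alpha\}$ and $\{t>0:\nu(x,t)\leq\alpha\}$ are nonempty (for large $t$ the values are close to $1$, resp.\ $0$) and upward closed in $t$, so both $\left\|x\right\|_{\alpha}$ and $\left\|x\right\|_{\alpha}^{\ast}$ are well-defined nonnegative reals, which is exactly $(P.1)$.

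For $(P.2)$ I would argue as follows for $\left\|\cdot\right\|_{\alpha}$, the argument for $\left\|\cdot\right\|_{\alpha}^{\ast}$ being dual. If $x=\theta$, then $(IFP.3)$ gives $\mu(\theta,t)=1\geq\alpha$ for every $t>0$, so the defining set is all of $(0,\infty)$ and its infimum is $0$. Conversely, if $\left\|x\right\|_{\alpha}=0$ then, using the non-decreasing monotonicity just established, $\mu(x,t)\geq\alpha$ holds for every $t>0$; choosing any $\alpha'\in(0,\alpha)$ we get $\mu(x,t)>\alpha'$ for all $t>0$, and $(IFP.7)$ forces $x=\theta$. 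The dual statement for $\left\|\cdot\right\|_{\alpha}^{\ast}$ uses $(IFP.10)$ and $(IFP.14)$, picking $\alpha''\in(\alpha,1)$ so that $\nu(x,t)<\alpha''$ for all $t>0$.

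The homogeneity-type axiom $(P.3)$ is the quickest: $(IFP.4)$ gives $\mu(cx,t)\geq\mu(x,t)$ when $|c|\leq 1$, so $\{t:\mu(x,t)\geq\alpha\}\subseteq\{t:\mu(cx,t)\geq\alpha\}$ and passing to infima yields $\left\|cx\right\|_{\alpha}\leq\left\|x\right\|_{\alpha}$; dually $(IFP.11)$ gives $\left\|cx\right\|_{\alpha}^{\ast}\leq\left\|x\right\|_{\alpha}^{\ast}$. For the triangle inequality $(P.4)$ I would use an $\varepsilon$-approximation of the infimum: given $\varepsilon>0$, pick $s<\left\|x\right\|_{\alpha}+\varepsilon$ and $t<\left\|y\right\|_{\alpha}+\varepsilon$ with $\mu(x,s)\geq\alpha$ and $\mu(y,t)\geq\alpha$; then $(IFP.5)$ gives $\mu(x+y,s+t)\geq\min\{\mu(x,s),\mu(y,t)\}\geq\alpha$, so $\left\|x+y\right\|_{\alpha}\leq s+t<\left\|x\right\|_{\alpha}+\left\|y\right\|_{\alpha}+2\varepsilon$, and letting $\varepsilon\to 0$ finishes the step; the dual uses $(IFP.12)$ with $\max$ in place of $\min$.

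Finally, the monotonicity of the families in $\alpha$ is a direct set-inclusion argument. If $\alpha_1\leq\alpha_2$ then $\{t:\mu(x,t)\geq\alpha_2\}\subseteq\{t:\mu(x,t)\geq\alpha_1\}$, so taking infima gives $\left\|x\right\|_{\alpha_1}\leq\left\|x\right\|_{\alpha_2}$, i.e.\ the family $\{\left\|\cdot\right\|_{\alpha}\}$ is ascending; dually $\{t:\nu(x,t)\leq\alpha_1\}\subseteq\{t:\nu(x,t)\leq\alpha_2\}$ gives $\left\|x\right\|_{\alpha_1}^{\ast}\geq\left\|x\right\|_{\alpha_2}^{\ast}$, so $\{\left\|\cdot\right\|_{\alpha}^{\ast}\}$ is descending. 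I expect the only genuinely delicate point to be the forward direction of $(P.2)$, where the non-strict inequality $\mu(x,t)\geq\alpha$ must be converted into a strict one to invoke $(IFP.7)$ (resp.\ $(IFP.14)$); the slack provided by $\alpha'\in(0,\alpha)$ (resp.\ $\alpha''\in(\alpha,1)$) is precisely what makes this conversion legitimate.
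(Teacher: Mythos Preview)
The paper does not supply a proof of this theorem; it is quoted from \cite{dinda1} as a preliminary, so there is nothing in the present paper to compare your argument against. That said, your proof is correct and self-contained: deriving the monotonicity of $\mu(x,\cdot)$ and $\nu(x,\cdot)$ from $(IFP.5)$/$(IFP.12)$ with $y=\theta$ is the right first step, the verifications of $(P.1)$--$(P.4)$ via set inclusion and an $\varepsilon$-approximation are standard and sound, and your handling of the delicate direction of $(P.2)$---introducing $\alpha'\in(0,\alpha)$ (resp.\ $\alpha''\in(\alpha,1)$) to convert the non-strict level-set inequality into the strict one required by $(IFP.7)$/$(IFP.14)$---is exactly what is needed.
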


\begin{theorem}\cite{dinda1}
Let $(X,\mu,\nu)$ be an intuitionistic fuzzy pseudo normed linear space and let $\,\mu\,^\prime\,,\,\nu\,^\prime\,:\,X\,\times\,\mathbb{R}\,\rightarrow\,[\,0\,,\,1\,]$ be defined by
\[\hspace{0.25 cm}\mu\,^\prime\,(\,x\,,\,t\,)\,=\begin{cases}\,\bigvee\,\{\,\alpha\in\,(0,1)\,:\,\left\| {\,x\,} \right\|_{\,\alpha }\,\leq\,t\},\hspace{0.25 cm}if\,t>0\\
\,0,\hspace{4.25 cm}\,if\,t\,\leq\,0\end{cases}\]

\[\hspace{0.5 cm}\nu\,^\prime\,(\,x\,,\,t\,)\,=\begin{cases}\,\bigwedge\,\{\,\,\alpha\in(0,1)\,:\,\left\| {\,x\,} \right\|_{\,\alpha }^{\,\ast}\,\leq\,t\},\hspace{0.25 cm}if\,t>0\\
\,1,\hspace{4.25 cm}\,if\,t\,\leq\,0\end{cases}\]
then\\
$(i)\,\, (\mu\,^\prime\,,\,\nu\,^\prime\,)$ is an intuitionistic fuzzy pseudo norm on $X$.\\
$(ii)\,\,\mu\,^\prime\,=\,\mu\,$ and $\nu\,^\prime\,=\,\nu,\,$ where $\left\| {\,\cdot\,} \right\|_{\,\alpha }$  is a ascending family of pseudo norms and $\left\| {\,\cdot\,} \right\|_{\,\alpha }^{\,\ast}$ is a descending family of pseudo norms defined in Theorem \ref{th1}.
\end{theorem}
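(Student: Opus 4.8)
The plan is to treat the pairing $(\mu,\nu)\leftrightarrow(\{\|\cdot\|_\alpha\},\{\|\cdot\|^*_\alpha\})$ as a Galois-type duality and to use it twice: once to check the fifteen axioms for $(\mu',\nu')$, and once to identify the reconstruction with the original. First I would record monotonicity: putting $y=\theta$ in (IFP.5) and using (IFP.3) gives $\mu(x,s+t)\ge\min\{\mu(x,s),\mu(\theta,t)\}=\mu(x,s)$ for $s,t>0$, so $\mu(x,\cdot)$ is nondecreasing on $\mathbb{R}^+$, and dually $\nu(x,\cdot)$ is nonincreasing from (IFP.12) and (IFP.10). Consequently $A_\alpha(x)=\{s>0:\mu(x,s)\ge\alpha\}$ is an upper ray with $\|x\|_\alpha=\bigwedge A_\alpha(x)$, and for every $t>0$ one has the equivalence
\[
\|x\|_\alpha\le t\iff \mu(x,s)\ge\alpha\ \text{for all}\ s>t,
\]
the forward implication being immediate and the reverse holding because any point strictly above the infimum of an upper ray lies in the ray. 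This equivalence, together with its $\nu$-analogue via $\|x\|^*_\alpha$, drives both parts.

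For part $(i)$ I would verify (IFP.1)--(IFP.15) for $(\mu',\nu')$ by reading them off the pseudo-norm axioms (P.1)--(P.4) and the ascending/descending character of the two families. Most are routine. Axiom (IFP.4) follows from $\|cx\|_\alpha\le\|x\|_\alpha$ (and the reverse for $\|\cdot\|^*_\alpha$), since this only enlarges the index set over which the $\bigvee$ is taken. The triangle axiom (IFP.5) is the representative nontrivial case: given any $\gamma<\min\{\mu'(x,s),\mu'(y,t)\}$, the ascending property forces $\|x\|_\gamma\le s$ and $\|y\|_\gamma\le t$, whence subadditivity $\|x+y\|_\gamma\le\|x\|_\gamma+\|y\|_\gamma\le s+t$ gives $\gamma\le\mu'(x+y,s+t)$; supremizing over $\gamma$ yields (IFP.5), and (IFP.12) is dual with $\max$ and the descending family. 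The normalisation axioms (IFP.3) and (IFP.10) use P.2, the limit axioms (IFP.6) and (IFP.13) use finiteness of the pseudo-norms, (IFP.7) and (IFP.14) use P.2 together with monotonicity of the families in $\alpha$, the compatibility (IFP.1) is inherited from the original (IFP.1) through the duality, and (IFP.2), (IFP.9) are immediate from the values assigned for $t\le 0$. The one subtle axiom in $(i)$ is the left continuity (IFP.8), (IFP.15) of $\mu'$, $\nu'$ in $t$: as a $\bigvee$/$\bigwedge$ over a monotone family the reconstructed function is automatically one-sidedly continuous, and whether that side is the left one hinges on the strictness built into ``$\|x\|_\alpha\le t$'' --- the same issue that dominates part $(ii)$.

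Part $(ii)$ is the crux, and the place I expect real difficulty. Fix $x$ and $t>0$ and set $\beta=\mu(x,t)$. The lower bound $\mu'(x,t)\ge\beta$ is immediate: for every $\alpha<\beta$, monotonicity gives $\mu(x,s)\ge\mu(x,t)>\alpha$ for all $s>t$, so $\|x\|_\alpha\le t$ by the equivalence above and hence $\alpha\le\mu'(x,t)$. The reverse bound is where the work lies. Read in the other direction, $\|x\|_\alpha\le t$ controls $\mu(x,s)$ only for $s>t$, so it sees the right-hand limit $\lim_{s\to t^+}\mu(x,s)$ rather than the value $\mu(x,t)$; the naive computation therefore returns $\bigvee\{\alpha:\|x\|_\alpha\le t\}=\lim_{s\to t^+}\mu(x,s)$, and pushing this down to $\mu(x,t)$ is exactly what the continuity axiom must supply. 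The delicate point --- and the main obstacle --- is that (IFP.8) is \emph{left} continuity, controlling $\lim_{s\to t^-}\mu(x,s)$, the opposite side. To make $\mu'=\mu$ come out I would reconcile the two by approximating $\|x\|_\alpha$ from below through the strict relation $\|x\|_\alpha<t$, for which $\bigvee\{\alpha:\|x\|_\alpha<t\}=\lim_{s\to t^-}\mu(x,s)=\mu(x,t)$ by (IFP.8), and then verifying that this supremum agrees with the one over $\|x\|_\alpha\le t$; the dual computation with $\|x\|^*_\alpha$ and (IFP.15) gives $\nu'=\nu$. Getting this one-sided matching exactly right, so that the reconstruction returns the value of $\mu$ and not a one-sided limit of it, is the heart of the proof.
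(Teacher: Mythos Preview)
This theorem carries no proof in the present paper: it is stated in the Preliminaries as a result quoted from \cite{dinda1}, with no accompanying argument, so there is nothing here against which to compare your proposal.

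On the proposal itself, your diagnosis of the difficulty in part~$(ii)$ is correct, but the reconciliation you sketch does not close the gap. From your own equivalence $\|x\|_\alpha\le t\iff\mu(x,s)\ge\alpha$ for all $s>t$ one reads off
\[
\mu'(x,t)=\bigvee\{\alpha:\|x\|_\alpha\le t\}=\inf_{s>t}\mu(x,s)=\lim_{s\to t^{+}}\mu(x,s),
\]
the \emph{right} limit, while the strict version gives $\bigvee\{\alpha:\|x\|_\alpha<t\}=\lim_{s\to t^{-}}\mu(x,s)=\mu(x,t)$ by (IFP.8), exactly as you say. But ``verifying that this supremum agrees with the one over $\|x\|_\alpha\le t$'' is then equivalent to showing $\mu(x,\cdot)$ has no jump at $t$, and left continuity alone does not give that. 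With the definitions exactly as stated, $\mu'$ is the right-continuous regularisation of $\mu$, and $\mu'=\mu$ holds only under an additional right-continuity hypothesis or with $\le$ replaced by $<$ in the definition of $\mu'$. Whatever device \cite{dinda1} uses to get past this point, it is not reproduced in the paper at hand, so your instinct that this is where the real content lies is well placed.
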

\section{Intuitionistic Fuzzy continuities of operators}
This section deals with the study of different types of continuities of bounded linear operators and their intra relations in intuitionistic fuzzy pseudo normed linear spaces. 

\begin{definition}
Let $(X,\mu_1, \nu_1),\, (Y,\mu_2, \nu_2)$ be intuitionistic fuzzy pseudo normed linear spaces.  A mapping $T:(X,\mu_1, \nu_1) \to (Y,\mu_2, \nu_2)$ is said to be IFC at $x_0 \in X$ if for any given $\epsilon>0$ and $\alpha\in(0,1)$ there exists $\delta=\delta(\alpha, \epsilon)>0$, $\beta=\beta(\alpha, \epsilon)\in(0,1)$ such that for all $x \in X$, 
\[\mu_1(x-x_0,\delta)>1-\beta\,\,\Rightarrow\, \mu_2(T(x)-T(x_0), \epsilon)>1-\alpha\]
\[\nu_1(x-x_0,\delta)<\beta\,\,\Rightarrow\, \nu_2(T(x)-T(x_0), \epsilon)<\alpha.\]
\end{definition}

\begin{definition}
Let $(X,\mu_1, \nu_1),\, (Y,\mu_2, \nu_2)$ be intuitionistic fuzzy pseudo normed linear spaces. A mapping $T:(X,\mu_1, \nu_1) \to (Y,\mu_2, \nu_2)$ is said to be sequentially IFC at $x_0 \in X$ if for any sequence $\{x_n\}_n,\,x_n\in X$ and $t>0$,
\[\mathop{\lim }\limits_{t\, \to\,\,\infty }\,\mu_1(x_n - x_0,t)=1\,\,\Rightarrow\,\mathop{\lim }\limits_{t\, \to\,\,\infty }\,\mu_2(T(x_n)-T(x_0),t)=1\]
\[\mathop{\lim }\limits_{t\, \to\,\,\infty }\,\nu_1(x_n - x_0,t)=0\,\,\Rightarrow\,\mathop{\lim }\limits_{t\, \to\,\,\infty }\,\nu_2(T(x_n)-T(x_0),t)=0.\]
\end{definition} 

\begin{theorem}
If a linear operator $T:(X,\mu_1, \nu_1) \to (Y,\mu_2, \nu_2)$ is sequentially IFC at a point $a_0 \in X$ then it is sequentially IFC on $ X$, where $(X,\mu_1, \nu_1)$ and  $(Y,\mu_2, \nu_2)$ are intuitionistic fuzzy pseudo normed linear spaces.
\end{theorem}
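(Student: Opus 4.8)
The plan is to exploit the additivity of $T$ to transport the hypothesis at the single point $a_0$ to an arbitrary point of $X$ by a translation trick. Fix an arbitrary $a \in X$; I will show that $T$ is sequentially IFC at $a$, and since $a$ is arbitrary this yields sequential IFC on all of $X$. One preliminary remark: in order for the statement to carry content I read the limits in the definition of sequential IFC as $n \to \infty$ (for each fixed $t > 0$), in accordance with the notion of convergence of a sequence; read literally as $t \to \infty$, properties (IFP.6) and (IFP.13) would make every hypothesis and conclusion automatic.

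First I would take an arbitrary sequence $\{x_n\}$ in $X$ converging to $a$, that is, $\lim_{n\to\infty}\mu_1(x_n - a, t) = 1$ and $\lim_{n\to\infty}\nu_1(x_n - a, t) = 0$ for every $t > 0$. The key device is to form the shifted sequence $y_n = x_n - a + a_0$. Since $y_n - a_0 = x_n - a$ as vectors, we have $\mu_1(y_n - a_0, t) = \mu_1(x_n - a, t)$ and $\nu_1(y_n - a_0, t) = \nu_1(x_n - a, t)$ for every $t$, so $\{y_n\}$ converges to $a_0$. Invoking the hypothesis that $T$ is sequentially IFC at $a_0$, applied to $\{y_n\}$, gives $\lim_{n\to\infty}\mu_2(T(y_n) - T(a_0), t) = 1$ and $\lim_{n\to\infty}\nu_2(T(y_n) - T(a_0), t) = 0$ for every $t > 0$. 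Now linearity of $T$ enters: $T(y_n) - T(a_0) = T(y_n - a_0) = T(x_n - a) = T(x_n) - T(a)$. Substituting this identity into the two limits yields $\lim_{n\to\infty}\mu_2(T(x_n) - T(a), t) = 1$ and $\lim_{n\to\infty}\nu_2(T(x_n) - T(a), t) = 0$, i.e. $T(x_n) \to T(a)$, which is sequential IFC at $a$.

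There is no serious obstacle here: the whole argument reduces to elementary vector-space arithmetic together with the additivity of $T$. The only point requiring care is to confirm the translation-compatibility of the membership and non-membership values, namely that $\mu_1$ and $\nu_1$ depend on the pair $(x_n, a)$ only through the difference $x_n - a$; this is exactly what allows the shift $y_n = x_n - a + a_0$ to carry convergence at $a$ over to convergence at $a_0$. Once that is observed, linearity of $T$ does the rest, and the conclusion follows for arbitrary $a$.
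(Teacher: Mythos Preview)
Your proof is correct and follows essentially the same translation argument as the paper: shift the sequence by $-a+a_0$ to reduce to continuity at $a_0$, then use linearity of $T$ to identify $T(y_n)-T(a_0)$ with $T(x_n)-T(a)$. Your side remark about reading the limits as $n\to\infty$ is also apt, since that is how the paper's own proof (and its definition of convergence) actually treats them.
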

\begin{proof}
Let $\{a_n\}_n$ be a sequence in $X$ and $a_n \to a,\;(a\in\,X)$. Then $\forall\,t>0,\;\mathop{\lim }\limits_{n\, \to\,\,\infty }\mu_1 (a_n -a,t)=1$ and $\mathop{\lim }\limits_{n\, \to\,\,\infty }\nu_1 (a_n -a,t)=0$. Therefore,\\
$\mathop{\lim }\limits_{n\, \to\,\,\infty }\mu_1 ((a_n -a+a_0)-a_0,t)=1$ and $\mathop{\lim }\limits_{n\, \to\,\,\infty }\nu_1 ((a_n -a+a_0)-a_0,t)=0$.\\
Since $T$ is sequentially IFC at $x_0,\;\forall\,t>0$ we have\\
$\mathop{\lim }\limits_{n\, \to\,\,\infty }\mu_1 (T(a_n -a+a_0)-T(a_0),t)=1$ and $\mathop{\lim }\limits_{n\, \to\,\,\infty }\nu_1 (T(a_n -a+a_0)-T(a_0),t)=0$.\\
$\Rightarrow\,\mathop{\lim }\limits_{n\, \to\,\,\infty }\mu_1 (T(a_n) -T(a)+T(a_0)-T(a_0),t)=1$ and $\mathop{\lim }\limits_{n\, \to\,\,\infty }\nu_1 (T(a_n) -T(a)+T(a_0)-T(a_0),t)=0$, since $T$ is linear.\\
$\Rightarrow\,\mathop{\lim }\limits_{n\, \to\,\,\infty }\mu_1 (T(a_n) -T(a),t)=1$ and $\mathop{\lim }\limits_{n\, \to\,\,\infty }\nu_1 (T(a_n) -T(a),t)=0$. Since $a\in\,X$ is arbitrary, $T$ is sequentially IFC on $ X$.
\end{proof}

\begin{theorem}\label{scc1}
A linear operator $T:(X,\mu_1, \nu_1) \to (Y,\mu_2, \nu_2)$ is sequentially IFC if and only if it is IFC, where $(X,\mu_1, \nu_1)$ and  $(Y,\mu_2, \nu_2)$ are intuitionistic fuzzy pseudo normed linear spaces. 
\end{theorem}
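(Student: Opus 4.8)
The plan is to establish the equivalence at an arbitrary fixed point $x_0\in X$; since both ``sequentially IFC'' and ``IFC'' are pointwise notions quantified over all points, this pointwise equivalence delivers the global statement. Before either direction I would record a monotonicity fact read off from Definition \ref{d1}: putting $y=\theta$ in (IFP.5) and using $\mu(\theta,t)=1$ from (IFP.3) gives $\mu(x,s+t)\ge\min\{\mu(x,s),1\}=\mu(x,s)$, so $\mu(x,\cdot)$ is non-decreasing on $(0,\infty)$; dually (IFP.12) together with (IFP.10) makes $\nu(x,\cdot)$ non-increasing. This single structural fact does two jobs. First, it lets one decouple the IFC condition: although IFC demands one pair $(\delta,\beta)$ serving both the $\mu$- and the $\nu$-implication, a pair working for $\mu$ alone and a pair working for $\nu$ alone can be merged by taking the smaller scale and the smaller level, the monotonicity guaranteeing that the weakened hypotheses still trigger each conclusion. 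Hence IFC at $x_0$ is equivalent to the conjunction of a purely $\mu$-condition and a purely $\nu$-condition, which matches the already-separated form of the sequential definition. The theorem therefore reduces to two mirror-image scalar equivalences, and I would write out only the $\mu$-version, the $\nu$-version being obtained by reversing inequalities and interchanging the roles of $1$ and $0$.

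For the easy direction (IFC $\Rightarrow$ sequentially IFC), suppose $T$ is IFC at $x_0$ and let $\{x_n\}$ be a sequence along which $\mu_1(x_n-x_0,t)\to1$ for every $t>0$. Fix $t>0$ and $\alpha\in(0,1)$ and invoke the IFC definition with $\epsilon=t$ to obtain $\delta,\beta$. Since $\mu_1(x_n-x_0,\delta)\to1$, it eventually exceeds $1-\beta$, and the $\mu$-implication then yields $\mu_2(T(x_n)-T(x_0),t)>1-\alpha$ for all large $n$. As $\alpha\in(0,1)$ was arbitrary, $\mu_2(T(x_n)-T(x_0),t)\to1$ for each $t>0$, which is exactly sequential IFC for the membership part.

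The hard direction (sequentially IFC $\Rightarrow$ IFC) is where the real work lies, and I would argue it by contraposition. If the $\mu$-part of IFC fails at $x_0$, there are $\epsilon_0>0$ and $\alpha_0\in(0,1)$ such that for every scale and level a violating point exists; choosing scale and level both equal to $1/n$ produces $x_n$ with $\mu_1(x_n-x_0,1/n)>1-\tfrac1n$ while $\mu_2(T(x_n)-T(x_0),\epsilon_0)\le1-\alpha_0$. Here the monotonicity is indispensable: for any fixed $t>0$ and all $n$ with $1/n<t$ it gives $\mu_1(x_n-x_0,t)\ge\mu_1(x_n-x_0,1/n)>1-\tfrac1n$, so $\mu_1(x_n-x_0,t)\to1$ for every $t$. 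Sequential IFC then forces $\mu_2(T(x_n)-T(x_0),t)\to1$ for every $t$, contradicting the uniform bound $\mu_2(T(x_n)-T(x_0),\epsilon_0)\le1-\alpha_0$. The main obstacle, and the step I would be most careful with, is precisely this construction: one must extract the negated condition separately for the $\mu$- and $\nu$-implications (so that the disjunctive negation of the combined IFC statement never forces an uncontrolled coordinate) and then verify that the diagonal witnesses $x_n$ genuinely realize the required one-sided convergence. Both points rest on the scale-monotonicity established at the outset, which is why I would prove that lemma first.
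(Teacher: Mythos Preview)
Your proposal is correct and follows essentially the same route as the paper: both directions are handled at a fixed point, with IFC $\Rightarrow$ sequential IFC argued directly and the converse by contraposition via diagonal witnesses at scale and level $1/n$ (the paper uses $1/(n+1)$). Your explicit monotonicity lemma and the decoupling of the $\mu$- and $\nu$-conditions are genuine improvements in rigor over the paper's own proof, which tacitly relies on both points without justification (in particular, the paper never explains why $\mu_1(b_n-a_0,\tfrac{1}{n+1})>1-\tfrac{1}{n+1}$ yields convergence at every fixed $t>0$).
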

\begin{proof}
First suppose $T$ be IFC at $a_0\in\,X$. Let $\{a_n\}_n$ be a sequence in $X$ converges to $a_0$. Then for any given $\epsilon>0,\,\alpha\in(0,1)$ there exist $\delta=\delta(\epsilon,\alpha)>0$ and $\beta=\beta(\epsilon,\alpha)\in(0,1)$ such that $\forall\,a\in X$,
\[\mu_1(a-a_0,\delta)>1-\beta\;\,\Rightarrow\,\mu_2(T(a)-T(a_0),\epsilon)>1-\alpha\]
\[\nu_1(a-a_0,\delta)<\beta\;\Rightarrow\,\nu_2(T(a)-T(a_0),\epsilon)<\alpha\]
Since $a_n$ converges to $a_0$ there exists $n_0\in\mathbb{N}$ such that $\forall\,n_0\geq\,n$,\\
$\mu_1(a_n-a_0,\delta)>1-\beta,\;\nu_1(a_n-a_0,\delta)<\beta$ and since $T$ is IFC at $a_0\in X$, we have\\
$\mu_2(T(a_n)-T(a_0),\epsilon)>1-\alpha$ and $\nu_2(T(a_n)-T(a_0),\epsilon)<\alpha$\\
$\Rightarrow\,T(a_n)\to T(a_0)$ i.e., $T$ is sequentially IFC at $a_0\in\,X$.\\
Conversely, suppose $T$ be not IFC at $a_0\in\,X$. Then there exist $b\in X$ such that for any given $\epsilon>0,\;\alpha\in(0,1)$ there exist $\delta>0,\;\beta\in(0,1)$,\\
$\mu_1(b-a_0,\delta)>1-\beta\;\\\,\Rightarrow\,\mu_2(T(y)-T(x_0),\epsilon)\leq\,1-\alpha$ and $\nu_1(b-a_0,\delta)<\beta\;\\\,\Rightarrow\,\nu_2(T(y)-T(x_0),\epsilon)\geq\,1-\alpha$.\\
Hence for $\delta=\beta=\frac{1}{n+1}$ there exist $b_n$ for $n=1,2,\cdots,$ such that\\
$\mu_1(b_n-a_0,\delta)=\mu_1(b_n-a_0,\frac{1}{n+1})>1-\frac{1}{n+1}\;\\\,\Rightarrow\,\mu_2(T(b_n)-T(a_0),\epsilon)\leq\,1-\alpha$ and \\$\nu_1(b_n-a_0,\delta)=\nu_1(b_n-a_0,\frac{1}{n+1})<\frac{1}{n+1}\;\\\,\Rightarrow\,\nu_2(T(b_n)-T(a_0),\epsilon)\geq\,\alpha$.\\ Therefore, $\mathop{\lim }\limits_{n\, \to\,\,\infty }\mu_1(b_n-a_0,\delta)=1\,\\\,\Rightarrow\,\mathop{\lim }\limits_{n\, \to\,\,\infty }\mu_2(T(b_n)-T(a_0),\epsilon)\neq\,1$ and $\mathop{\lim }\limits_{n\, \to\,\,\infty }\nu_1(b_n-a_0,\delta)=0\,\\\,\Rightarrow\,\mathop{\lim }\limits_{n\, \to\,\,\infty }\nu_2(T(b_n)-T(a_0),\epsilon)\neq\,0$. Hence $T$ is not sequentially IFC at $a_0$.
\end{proof}

\begin{definition}
Let $(X,\mu_1, \nu_1),\, (Y,\mu_2, \nu_2)$ be intuitionistic fuzzy pseudo normed linear spaces. A mapping $T:(X,\mu_1, \nu_1) \to (Y,\mu_2, \nu_2)$ is said to be strongly IFC at $x_0 \in X$ if for any given $\epsilon>0$ there exists $\delta(\epsilon)>0$ such that for all  $x\in X$,
\[\mu_2(T(x)-T(x_0),\epsilon)\,\geq\,\mu_1(x-x_0,\delta)\,,\,\,\nu_2(T(x)-T(x_0),\epsilon)\,\leq\,\nu_1(x-x_0,\delta).\]
\end{definition}

\begin{theorem}\label{sifc1}
If a linear operator $T:(X,\mu_1, \nu_1) \to (Y,\mu_2, \nu_2)$ is strongly IFC at a point $a_0 \in X$ then it is strongly IFC on $X$, where $(X,\mu_1, \nu_1)$ and  $(Y,\mu_2, \nu_2)$ are intuitionistic fuzzy pseudo normed linear spaces. 
\end{theorem}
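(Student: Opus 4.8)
The plan is to fix an arbitrary point $a \in X$ and show that $T$ is strongly IFC at $a$, exploiting the linearity of $T$ via the same translation device used in the preceding sequential-continuity results. Starting from the hypothesis that $T$ is strongly IFC at $a_0$, I would, for a given $\epsilon > 0$, invoke that hypothesis to obtain a $\delta = \delta(\epsilon) > 0$ such that for every $y \in X$ one has $\mu_2(T(y) - T(a_0), \epsilon) \geq \mu_1(y - a_0, \delta)$ and $\nu_2(T(y) - T(a_0), \epsilon) \leq \nu_1(y - a_0, \delta)$. The goal is to produce, for the same $\epsilon$, a $\delta$ that works at $a$, and the claim is that this very same $\delta$ suffices.

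The key step is the substitution $y = x - a + a_0$, applied for an arbitrary $x \in X$. With this choice $y - a_0 = x - a$, so the right-hand sides of the two inequalities above become $\mu_1(x - a, \delta)$ and $\nu_1(x - a, \delta)$, which are exactly the expressions required in the definition of strong IFC at $a$. For the left-hand sides, linearity of $T$ gives $T(y) - T(a_0) = T(x - a + a_0) - T(a_0) = T(x) - T(a) + T(a_0) - T(a_0) = T(x) - T(a)$, hence $\mu_2(T(y) - T(a_0), \epsilon) = \mu_2(T(x) - T(a), \epsilon)$ and similarly for $\nu_2$. Substituting these identities into the inequalities yields $\mu_2(T(x) - T(a), \epsilon) \geq \mu_1(x - a, \delta)$ and $\nu_2(T(x) - T(a), \epsilon) \leq \nu_1(x - a, \delta)$ for all $x \in X$, which is precisely strong IFC at $a$.

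Since $a \in X$ was arbitrary, this establishes strong IFC on all of $X$. I do not anticipate a genuine obstacle here: the only point to verify carefully is that the $\delta$ furnished by the hypothesis depends on $\epsilon$ alone and not on the base point, so that the same $\delta$ may legitimately be reused after the translation — which holds because the strong-IFC condition at $a_0$ quantifies uniformly over all $y \in X$. The essential mechanism is that the map $x \mapsto x - a + a_0$ carries the increment $x - a$ to $y - a_0$ while, by linearity, carrying $T(x) - T(a)$ to $T(y) - T(a_0)$, so the inequality at $a_0$ transports verbatim to the desired inequality at $a$.
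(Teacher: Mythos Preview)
Your proposal is correct and follows essentially the same route as the paper: both arguments use the translation $y = x - a + a_0$ (the paper writes it as replacing $a$ by $a + a_0 - b$) together with linearity of $T$ to transport the strong-IFC inequality from the base point $a_0$ to an arbitrary point, reusing the same $\delta(\epsilon)$. The only difference is notational.
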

\begin{proof}
Since $T$ is strongly IFC at $a_0$, for given $\epsilon>0$ there exists 
$\delta(\epsilon)>0$ such that $\forall\,a\in X$,\\
$\mu_2(T(a)-T(a_0),\epsilon)\geq\,\mu_1(a-a_0,\delta)$ and $\nu_2(T(a)-T(a_0),\epsilon)\leq\,\nu_1(a-a_0,\delta)$.\\
Taking $b\in X$ we have $a+a_0-b\in X$. Therefore replacing $a$ by $a+a_0-b$ we have,\\
$\mu_2(T(a+a_0-b)-T(a_0),\epsilon)\geq\,\mu_1(a+a_0-b-a_0,\delta)$ and $\nu_2(T(a+a_0-b)-T(a_0),\epsilon)\leq\,\nu_1(a+a_0-b-a_0,\delta)$.\\
$\Rightarrow\mu_2(T(a)+T(a_0)-T(b)-T(a_0),\epsilon)\geq\,\mu_1(a-b,\delta)$ and $\nu_2(T(a)+T(a_0)-T(b)-T(a_0),\epsilon)\leq\,\nu_1(a-b,\delta)$.\\
$\Rightarrow\mu_2(T(a)-T(b),\epsilon)\geq\,\mu_1(a-b,\delta)$ and $\nu_2(T(a)-T(b),\epsilon)\leq\,\nu_1(a-b,\delta)$.\\
Hence $T$ is strongly IFC at $b$. Since $b\in X$ is arbitrary, $T$ is strongly IFC on $X$. 
\end{proof}

\begin{definition}
Let $(X,\mu_1, \nu_1),\, (Y,\mu_2, \nu_2)$ be intuitionistic fuzzy pseudo normed linear spaces. A mapping $T:(X,\mu_1, \nu_1) \to (Y,\mu_2, \nu_2)$ is said to be weakly IFC at $x_0 \in X$ if for any given $\epsilon>0$ and $\alpha\in(0,1)$ there exists $\delta=\delta(\alpha, \epsilon)>0$ such that for all $x\in X$,
\[\mu_1(x-x_0,\delta)\geq\, \alpha\,\,\Rightarrow\, \mu_2(T(x)-T(x_0), \epsilon)\geq\alpha\]
\[\nu_1(x-x_0,\delta)\leq \alpha\,\,\Rightarrow\, \nu_2(T(x)-T(x_0), \epsilon)\leq \alpha.\]
\end{definition}

\begin{theorem}\label{wifc2}
If a linear operator $T:(X,\mu_1, \nu_1) \to (Y,\mu_2, \nu_2)$ is weakly IFC at a point $a_0 \in X$ then it is weakly IFC on $ X$, where $(X,\mu_1, \nu_1)$ and  $(Y,\mu_2, \nu_2)$ are intuitionistic fuzzy pseudo normed linear spaces.
\end{theorem}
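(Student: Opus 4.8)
The plan is to imitate the proof of Theorem \ref{sifc1} almost verbatim, since the only structural features used there are the translation invariance of the difference $x-x_0$ and the linearity of $T$, both of which are equally available here. Fix an arbitrary point $b\in X$; the goal is to show that $T$ is weakly IFC at $b$, after which the conclusion follows because $b$ is arbitrary. So let $\epsilon>0$ and $\alpha\in(0,1)$ be given. By the hypothesis that $T$ is weakly IFC at $a_0$, there exists $\delta=\delta(\alpha,\epsilon)>0$ such that, for all $a\in X$,
\[\mu_1(a-a_0,\delta)\geq\alpha\,\Rightarrow\,\mu_2(T(a)-T(a_0),\epsilon)\geq\alpha,\qquad \nu_1(a-a_0,\delta)\leq\alpha\,\Rightarrow\,\nu_2(T(a)-T(a_0),\epsilon)\leq\alpha.\]

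The key step is the substitution $a=x+a_0-b$, which is legitimate because $X$ is a linear space, so $x+a_0-b\in X$ for every $x\in X$. With this choice the argument of $\mu_1$ simplifies to $(x+a_0-b)-a_0=x-b$, while linearity of $T$ gives $T(x+a_0-b)-T(a_0)=T(x)+T(a_0)-T(b)-T(a_0)=T(x)-T(b)$. Feeding these two identities into the $\mu$-implication above turns it into exactly
\[\mu_1(x-b,\delta)\geq\alpha\,\Rightarrow\,\mu_2(T(x)-T(b),\epsilon)\geq\alpha,\]
which is the $\mu$-part of the definition of weak IFC at $b$, with the very same $\delta$.

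The $\nu$-part is handled identically: the same substitution yields $\nu_1(x-b,\delta)\leq\alpha\,\Rightarrow\,\nu_2(T(x)-T(b),\epsilon)\leq\alpha$. Since $x\in X$ was arbitrary, $T$ is weakly IFC at $b$, and since $b\in X$ was arbitrary, $T$ is weakly IFC on $X$. I do not anticipate any real obstacle; the one point worth checking carefully is that $\delta$ may be kept fixed across the whole argument—it depends only on $\alpha$ and $\epsilon$ and not on the base point—which is precisely what the substitution guarantees, together with the correct application of linearity to split $T(x+a_0-b)$.
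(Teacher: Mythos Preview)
Your proof is correct and follows essentially the same route as the paper's own argument: start from the defining implication of weak IFC at $a_0$, perform the translation substitution $a\mapsto x+a_0-b$, use linearity of $T$ to simplify $T(x+a_0-b)-T(a_0)$ to $T(x)-T(b)$, and conclude by arbitrariness of $b$. The only cosmetic difference is variable naming; the logic is identical.
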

\begin{proof}
Since $T$ is weakly IFC on $a_0\in\,X$ then for given $\epsilon>0,\,\alpha\in(0,1)$ there exist $\delta(\alpha,\epsilon)>0$ such that $\forall\,a\in X$,\\
$\mu_1(a-a_0,\delta)\geq\,\alpha\;\Rightarrow\mu_2(T(a)-T(a_0),\epsilon)\geq\,\alpha$ and $\nu_1(a-a_0,\delta)\leq\,\alpha\;\Rightarrow\nu_2(T(a)-T(a_0),\epsilon)\leq\,\alpha$.\\ 
Taking $b\in X$ we have $a+a_0-b\in X$. Therefore replacing $a$ by $a+a_0-b$ we have,\\
$\mu_1(a-b,\delta)\geq\,\alpha\;\Rightarrow\,\mu_2(T(a+a_0-b)-T(a_0),\epsilon)\geq\,\alpha\;\Rightarrow\,\mu_2(T(a)+T(a_0)-T(b)-T(a_0),\epsilon)\geq\,\alpha\;\Rightarrow\,\mu_2(T(a)-T(b),\epsilon)\geq\,\alpha$ and\\ $\nu_1(a-b,\delta)\leq\,\alpha\;\Rightarrow\,\nu_2(T(a+a_0-b)-T(a_0),\epsilon)\leq\,\alpha\;\Rightarrow\,\nu_2(T(a)+T(a_0)-T(b)-T(a_0),\epsilon)\leq\,\alpha\;\Rightarrow\,\nu_2(T(a)-T(b),\epsilon)\leq\,\alpha$.\\
Since $y(\in\,X)$ is arbitrary, $T$ is weakly IFC on $X$.
\end{proof}  

\begin{theorem}\label{sw}
If a linear operator $T:(X,\mu_1, \nu_1) \to (Y,\mu_2, \nu_2)$ is strongly IFC then it is weakly IFC, where $(X,\mu_1, \nu_1)$ and  $(Y,\mu_2, \nu_2)$ are intuitionistic fuzzy pseudo normed linear spaces.
\end{theorem}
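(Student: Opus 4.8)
The plan is to observe that the pointwise inequalities defining strong IFC dominate the threshold implications required for weak IFC, with no change of modulus needed. So this is a direct, one-step deduction rather than a genuine analytic argument.

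First I would fix an arbitrary point $x_0 \in X$. Since $T$ is strongly IFC, for any given $\epsilon>0$ there exists $\delta=\delta(\epsilon)>0$ such that, for all $x\in X$,
\[
\mu_2(T(x)-T(x_0),\epsilon)\,\geq\,\mu_1(x-x_0,\delta),
\qquad
\nu_2(T(x)-T(x_0),\epsilon)\,\leq\,\nu_1(x-x_0,\delta).
\]
To verify weak IFC at $x_0$, let $\epsilon>0$ and $\alpha\in(0,1)$ be given, and simply take the weak modulus $\delta(\alpha,\epsilon)$ to be this same $\delta(\epsilon)$; the point is that the strong estimate holds uniformly and is independent of $\alpha$, so no further choice is required.

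The verification is then a short chain of inequalities. If $\mu_1(x-x_0,\delta)\geq\alpha$, then the $\mu$-estimate of strong IFC gives $\mu_2(T(x)-T(x_0),\epsilon)\geq\mu_1(x-x_0,\delta)\geq\alpha$, which is exactly the $\mu$-implication in the definition of weak IFC. Symmetrically, if $\nu_1(x-x_0,\delta)\leq\alpha$, then $\nu_2(T(x)-T(x_0),\epsilon)\leq\nu_1(x-x_0,\delta)\leq\alpha$, giving the $\nu$-implication. Hence $T$ is weakly IFC at $x_0$, and since $x_0\in X$ was arbitrary, $T$ is weakly IFC on $X$; alternatively one may invoke Theorem \ref{sifc1} and Theorem \ref{wifc2} to pass from a single point to all of $X$.

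I do not expect any real obstacle here: the entire content is that the strong-continuity inequality $\mu_2(T(x)-T(x_0),\epsilon)\geq\mu_1(x-x_0,\delta)$ automatically transmits any lower bound on the right-hand side to the left-hand side, and dually for $\nu$. The only bookkeeping subtlety is reading the unqualified phrases ``strongly IFC'' and ``weakly IFC'' as holding at each point of $X$, which is harmless given the localization results already established.
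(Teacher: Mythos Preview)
Your proposal is correct and is precisely the approach the paper takes: the paper's proof reads in its entirety ``From the definitions of strongly IFC and weakly IFC it follows,'' and you have simply written out those details explicitly. Your observation that the strong modulus $\delta(\epsilon)$ serves as the weak modulus $\delta(\alpha,\epsilon)$ for every $\alpha$ is exactly the content behind that one-line proof.
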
 
\begin{proof}
From the definitions of strongly IFC and weakly IFC it follows.
\end{proof}
The next example shows that in an intuitionistic fuzzy pseudo normed linear space weakly intuitionistic fuzzy continuity may not imply strongly intuitionistic fuzzy continuity.  

\begin{example}\label{e1}
Let $(X,\|\cdot\|)$ be a pseudo normed linear space and $\mu,\nu:X\times \mathbb{R}\rightarrow [0,1]\,$ be defined by:
\[\mu(x,t)\,=\begin{cases}\;1\,\,\hspace{2.2 cm}if\;t>0,\,\|\,x\,\|<t.\hspace{0 cm}
\\ \frac{t}{t+\|x\|}\,\,\hspace{1.8 cm}if\,t>0,\,\|\,x\|\geq\,t\,.\,\hspace{-2 cm}
\\0\,\,\,\hspace{2.2 cm}if\,\,t\,\leq\,0.\,\hspace{0 cm}\end{cases}\]
\[\nu(x,t)\,=\begin{cases}0\,\,\,\hspace{2.2 cm}if\;t>0,\,\|\,x\,\|<t.\hspace{0 cm}
\\ \frac{\|x\|}{t+\|x\|}\,\,\,\hspace{1.7 cm}if\,t>0,\,\|\,x\|\geq\,t\,.\,\hspace{-2 cm}
\\1\,\,\,\hspace{2.2 cm}if\,\,t\,\leq\,0.\,\hspace{0 cm}\end{cases}\]
then by Example 3.2 of \cite{dinda1}, $\;(X,\mu,\nu)$ is an IFPNLS. \\
Let $T:(X,\mu,\nu)\to(X,\mu,\nu)$ be a linear operator defined by $T(x)=\frac{x^3}{1+x}$.
\\
Let $x_0\in\,X$ then for each $x\in X,\; \epsilon>0$ and $\alpha\in(0,1)$,\\
$\mu(T(x)-T(x_0),\epsilon)\geq\,\alpha\;\\\,
\Leftarrow\,\dfrac{\epsilon}{\epsilon+\left\|T(x)-T(x_0)\right\|}\geq\,\alpha\,\\\;
\Leftarrow\,\dfrac{\epsilon}{\epsilon+\|\frac{x^3}{1+x}-\frac{{x_0}^3}{1+x_0}\|}\geq\alpha\\
\Leftarrow\,\dfrac{\epsilon\|(1+x)(1+x_0)\|}{\epsilon\|(1+x)(1+x_0)\|+\|x^3 +x_0 x^3 - {x_0}^3-x{x_0}^3\|}\geq\alpha\,\\
\Leftarrow\dfrac{\epsilon\|1+x+x_0+xx_0)}{\epsilon\|1+x+x_0+xx_0)\|+\|(x-x_0)(x^2+xx_0+x^2_0)+xx_0(x+x_0)(x-x_0)\|}\geq\alpha\\
\Leftarrow\dfrac{\epsilon\|1+x+x_0+xx_0)\|}{\epsilon\|1+x+x_0+xx_0)\|+\|(x-x_0)\|\|(x^2+xx_0+x^2_0+x^2x_0+xx^2_0)\|}\geq\alpha\\
\Leftarrow\dfrac{\epsilon\frac{\|1+x+x_0+xx_0)\|}{\|(x^2+xx_0+x^2_0+x^2 x_0+xx^2_0)\|}}{\epsilon\frac{\|1+x+x_0+xx_0)\|}{\|(x^2+xx_0+x^2_0+x^2 x_0+xx^2_0)\|}+\|(x-x_0)\|}\geq\alpha\\
\Leftarrow\,\epsilon\dfrac{\|1+x+x_0+xx_0)\|}{\|(x^2+xx_0+x^2_0+x^2 x_0+xx^2_0)\|}\geq\alpha.\epsilon\frac{\|1+x+x_0+xx_0)\|}{\|(x^2+xx_0+x^2_0+x^2 x_0+xx^2_0)\|}+\alpha\|(x-x_0)\|\\
\Leftarrow\,\epsilon\geq\,\alpha.\epsilon+\alpha\|(x-x_0)\|.\frac{\|(x^2+xx_0+x^2_0+x^2 x_0+xx^2_0)\|}{\|1+x+x_0+xx_0)\|}\,\geq\,\alpha.\epsilon+\alpha\|(x-x_0)\|$, \\since $\dfrac{\|(x^2+xx_0+x^2_0+x^2 x_0+xx^2_0)\|}{\|1+x+x_0+xx_0)\|}\geq\,1$.\\
$\Leftarrow\,\delta\,\geq\,\alpha.\delta+\alpha\|(x-x_0)\|$, by taking $\epsilon=\delta$.\\
$\Leftarrow\,\dfrac{\delta}{\delta+\|(x-x_0)\|}\geq\,\alpha\,\\
\Leftarrow\,\mu(x-x_0,\delta)\geq\,\alpha$; and\\
$\nu(T(x)-T(x_0),\epsilon)\leq\,\alpha\;\\\,\Leftarrow\,\dfrac{\left\|T(x)-T(x_0)\right\|}{\epsilon+\left\|T(x)-T(x_0)\right\|}\leq\,\alpha\,\\\;\Leftarrow\,\dfrac{\|\frac{x^3}{1+x}-\frac{{x_0}^3}{1+x_0}\|}{\epsilon+\|\frac{x^3}{1+x}-\frac{{x_0}^3}{1+x_0}\|}\leq\,\alpha\\\Leftarrow\,\dfrac{\|x^3 +x_0 x^3 - {x_0}^3-x{x_0}^3\|}{\epsilon\|(1+x)(1+x_0)\|+\|x^3 +x_0 x^3 - {x_0}^3-x{x_0}^3\|}\leq\,\alpha\\\,
\Leftarrow\,\dfrac{\|(x-x_0)(x^2+xx_0+x^2_0)+xx_0(x+x_0)(x-x_0)\|}{\epsilon\|1+x+x_0+xx_0)\|+\|(x-x_0)(x^2+xx_0+x^2_0)+xx_0(x+x_0)(x-x_0)\|}\leq\alpha\\
\Leftarrow\,\dfrac{\|(x-x_0)(x^2+xx_0+x^2_0+x^2 x_0+xx^2_0)\|}{\epsilon\|1+x+x_0+xx_0)\|+\|(x-x_0)(x^2+xx_0+x^2_0+x^2x_0+xx^2_0)\|}\leq\alpha\\
\Leftarrow\,\dfrac{\|(x-x_0)\|\|(x^2+xx_0+x^2_0+x^2 x_0+xx^2_0)\|}{\epsilon\|1+x+x_0+xx_0)\|+\|(x-x_0)\|\|(x^2+xx_0+x^2_0+x^2 x_0+xx^2_0)\|}\leq\alpha\\
\Leftarrow\,\dfrac{\|(x-x_0)\|}{\epsilon\frac{\|1+x+x_0+xx_0)\|}{\|(x^2+xx_0+x^2_0+x^2 x_0+xx^2_0)\|}+\|(x-x_0)\|}\leq\alpha\\
\Leftarrow\,\alpha\,\|(x-x_0)\|+\alpha.\epsilon\dfrac{\|1+x+x_0+xx_0)\|}{\|(x^2+xx_0+x^2_0+x^2 x_0+xx^2_0)\|}\geq\,\|(x-x_0)\|\\
\Leftarrow\,(1-\alpha)\|(x-x_0)\|\,\leq\,\alpha.\epsilon\frac{\|1+x+x_0+xx_0)\|}{\|(x^2+xx_0+x^2_0+x^2 x_0+xx^2_0)\|}\leq\,\alpha.\epsilon$,\\ since $\dfrac{\|1+x+x_0+xx_0)\|}{\|(x^2+xx_0+x^2_0+x^2 x_0+xx^2_0)\|}\leq\,1$.\\
$\Leftarrow\,\|(x-x_0)\|-\alpha\|(x-x_0)\|\leq\,\alpha.\delta$, by taking $\delta=\epsilon$\\
$\Leftarrow\,\|(x-x_0)\|\leq\,\alpha(\delta+\|(x-x_0)\|)$\\
$\Leftarrow\,\dfrac{\|(x-x_0)\|}{\delta+\|(x-x_0)\|}\leq\,\alpha$\\
$\Leftarrow\,\nu(x-x_0,\delta)\leq\,\alpha$.\\
Thus for every $\epsilon>0$ and $\alpha\in(0,1)$ there exists $\delta=\delta(\alpha, \epsilon)>0$ such that for all $x\in X$ and $x_0\in\,X$
$\mu(x-x_0,\delta)\geq\, \alpha\,\,\Rightarrow\, \mu(T(x)-T(x_0), \epsilon)\geq\alpha,\;\;\nu(x-x_0,\delta)\leq \alpha\,\,\Rightarrow\, \nu(T(x)-T(x_0), \epsilon)\leq \alpha.$\\
Hence T is weakly intuitionistic fuzzy continuous at $x_0\in X$ and hence on $X$.\\
To show $T$ is not strongly intuitionistic fuzzy continuous, it is enough to for any given $\epsilon>0$ there does not exist a $\delta>0$ such that\\
$\mu(T(x)-T(x_0),\epsilon)\,\geq\,\mu(x-x_0,\delta)\;$ or $\;\nu(T(x)-T(x_0),\epsilon)\,\leq\,\nu(x-x_0,\delta).$\\
Let $\epsilon>0$, then $\forall\,x\in X$ and $x_0\in\,X$,\\ $\mu(T(x)-T(x_0),\epsilon)\,\geq\,\mu(x-x_0,\delta)\\
\Rightarrow\,\mu(\dfrac{x^3}{1+x}-\frac{x^3_0}{1+x_0},\epsilon)\,\geq\,\frac{\delta}{\delta+\|x-x_0\|}$\\
$\Rightarrow\,\dfrac{\epsilon}{\epsilon+\frac{\|x^3+x^3x_0-x^3_0-xx^3_0\|}{\|(1+x)(1+x_0)\|}}\geq\,\frac{\delta}{\delta+\|x-x_0\|}$\\
$\Rightarrow\,\dfrac{\epsilon\|(1+x+x_0+xx_0)\|}{\epsilon\|(1+x+x_0+xx_0)\|+\|x-x_0\|\|x^2+xx_0+x^2_0+x^2x_0+xx^2_0\|}\geq\,\frac{\delta}{\delta+\|x-x_0\|}$\\
$\Rightarrow\,\epsilon\|x-x_0\|\|(1+x+x_0+xx_0)\|\geq\,\delta\|x-x_0\|\|x^2+xx_0+x^2_0+x^2x_0+xx^2_0\|$\\
$\Rightarrow\,\delta\leq\,\epsilon\dfrac{\|(1+x+x_0+xx_0)\|}{\|x^2+xx_0+x^2_0+x^2x_0+xx^2_0\|}$.\\
Now $\inf\{\dfrac{\|(1+x+x_0+xx_0)\|}{\|x^2+xx_0+x^2_0+x^2x_0+xx^2_0\|}\}=0,\;\;\forall\,x\in\,X$. \\
Therefore, $\delta=0$, which is not possible. This shows that $T$ is not strongly intuitionistic fuzzy continuous. 
\end{example}

\begin{theorem}\label{ss}
If a linear operator $T:(X,\mu_1, \nu_1) \to (Y,\mu_2, \nu_2)$ is strongly IFC then it is sequentially IFC, where $(X,\mu_1, \nu_1)$ and  $(Y,\mu_2, \nu_2)$ are intuitionistic fuzzy pseudo normed linear spaces.
\end{theorem}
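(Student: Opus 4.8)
The plan is to exploit the fact that strong intuitionistic fuzzy continuity supplies, for each target radius $\epsilon>0$, a single radius $\delta(\epsilon)>0$ that dominates the images uniformly in the point variable, and then to combine this with the definition of sequential convergence via a squeeze argument. First I would reduce to a fixed base point: by Theorem \ref{sifc1}, strong IFC at one point is equivalent to strong IFC everywhere, so it suffices to verify the sequential condition at an arbitrary $x_0\in X$ together with an arbitrary sequence $\{x_n\}_n$ converging to $x_0$ (that is, $\mathop{\lim}\limits_{n\to\infty}\mu_1(x_n-x_0,t)=1$ and $\mathop{\lim}\limits_{n\to\infty}\nu_1(x_n-x_0,t)=0$ for every $t>0$).

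Then I would fix an arbitrary $\epsilon>0$ and let $\delta=\delta(\epsilon)>0$ be the radius guaranteed by strong IFC at $x_0$. Applying the defining inequalities with the choice $x=x_n$ yields, for every $n$,
\[
\mu_2(T(x_n)-T(x_0),\epsilon)\,\geq\,\mu_1(x_n-x_0,\delta),\qquad
\nu_2(T(x_n)-T(x_0),\epsilon)\,\leq\,\nu_1(x_n-x_0,\delta).
\]
Since the $\mu$-values never exceed $1$ and the $\nu$-values are never below $0$, these read $1\geq\mu_2(T(x_n)-T(x_0),\epsilon)\geq\mu_1(x_n-x_0,\delta)$ and $0\leq\nu_2(T(x_n)-T(x_0),\epsilon)\leq\nu_1(x_n-x_0,\delta)$.

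The decisive observation is that convergence $x_n\to x_0$ in an IFPNLS is required to hold at every positive radius, so in particular at the specific $\delta$ produced above one has $\mathop{\lim}\limits_{n\to\infty}\mu_1(x_n-x_0,\delta)=1$ and $\mathop{\lim}\limits_{n\to\infty}\nu_1(x_n-x_0,\delta)=0$. Passing to the limit in the two chains and invoking the squeeze principle gives $\mathop{\lim}\limits_{n\to\infty}\mu_2(T(x_n)-T(x_0),\epsilon)=1$ and $\mathop{\lim}\limits_{n\to\infty}\nu_2(T(x_n)-T(x_0),\epsilon)=0$. As $\epsilon>0$ was arbitrary, this is precisely $T(x_n)\to T(x_0)$, i.e.\ sequential IFC at $x_0$; and since $x_0$ was arbitrary the conclusion holds on all of $X$.

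I do not anticipate a genuine obstacle here: the argument is essentially a one-line squeeze once the uniform inequality of strong IFC is specialized to the sequence. The only point requiring mild care is ensuring that the $\delta$ chosen by strong continuity (which depends on $\epsilon$ but not on $n$) is a fixed positive radius, so that the convergence hypothesis --- stated for all $t>0$ --- may legitimately be invoked at that particular $\delta$.
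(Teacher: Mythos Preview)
Your proposal is correct and follows essentially the same route as the paper: fix a point and a sequence converging to it, use the inequality from strong IFC with the corresponding $\delta(\epsilon)$, and squeeze using the convergence hypothesis at that $\delta$. The only cosmetic difference is that you invoke Theorem~\ref{sifc1} to justify working at an arbitrary base point, which is harmless but unnecessary since the hypothesis already grants strong IFC on all of $X$.
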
 
\begin{proof}
Let $\{a_n\}_n$ be a sequence in $X$ such that $a_n\to\,a_0$. \\i.e., $\mathop{\lim }\limits_{n\, \to\,\,\infty }\mu_1(a_n-a_0)=1$ and  $\mathop{\lim }\limits_{n\, \to\,\,\infty }\nu_1(a_n-a_0)=0,\;\forall\,t>0$.\\
Now since $T$ is strongly IFC at $a_0\in X$. Then for $\epsilon>0,\;\exists\,\delta(\epsilon)>0$ such that $\forall\,a\in X,$
$\mu_2(T(a)-T(a_0),\epsilon)\geq\,\mu_1(a-a_0,\delta)$  and  $\nu_2(T(a)-T(a_0),\epsilon)\leq\,\nu_1(a-a_0,\delta)$.\\
Now, $\mathop{\lim }\limits_{n\, \to\,\,\infty }\mu_2(T(a_n)-T(a_0),\epsilon)\geq\,\mathop{\lim }\limits_{n\, \to\,\,\infty } \mu_1(a_n-a_0,\delta)=1\,\\\Rightarrow\,\mathop{\lim }\limits_{n\, \to\,\,\infty }\mu_2(T(a_n)-T(a_0),\epsilon)=1$, and\\
$\mathop{\lim }\limits_{n\, \to\,\,\infty }\nu_2(T(a_n)-T(a_0),\epsilon)\leq\,\mathop{\lim }\limits_{n\, \to\,\,\infty } \nu_1(a_n-a_0,\delta)=0\,\\\Rightarrow\,\mathop{\lim }\limits_{n\, \to\,\,\infty }\nu_2(T(a_n)-T(a_0),\epsilon)=0$.\\
Since $\epsilon$ is arbitrary small positive number, $T$ is sequentially IFC.
\end{proof}

The next example shows that in an intuitionistic fuzzy pseudo normed linear space sequentially intuitionistic fuzzy continuity may not imply strongly intuitionistic fuzzy continuity.
\begin{example}
Consider the intuitionistic fuzzy pseudo normed linear space $(X,\mu,\nu)$ as Example \ref{e1} and the linear operator $T$ is defined by $T(x)=\frac{x^3}{1+x}$.\\
Let $\{x_n\}_n$ be a sequence in $X$ such that $x_n\to\,x_0$ in $X$. Now $\forall\,t>0$,\\
$\mathop{\lim }\limits_{n\,\to\,\infty}\mu(x_n-x_0,t)=1\,,\;\;\mathop{\lim }\limits_{n\,\to\,\infty}\nu(x_n-x_0,t)=0$. \\
$\Rightarrow\,\mathop{\lim }\limits_{n\,\to\,\infty}\frac{t}{t+\|x_n-x_0\|}=1\,,\;\;\mathop{\lim }\limits_{n\,\to\,\infty}\frac{\|x_n-x_0\|}{t+\|x_n-x_0\|}=0$.
\begin{equation}\label{eq31}
\Rightarrow\,\mathop{\lim }\limits_{n\,\to\,\infty}\|x_n-x_0\|=0
\end{equation}
Now $\mu(T(x_n)-T(x_0),t)=\dfrac{t}{t+\|\frac{x^3_n}{1+x_n}-\frac{x^3_0}{1+x_0}\|}=\dfrac{t}{t+\|\frac{x^3_n+x_0x^3_n-x^3_0-x_n x^3_0}{(1+x_n)(1+x_0)}\|}\\=\dfrac{t\|(1+x_n)(1+x_0)\|}{t\|(1+x_n)(1+x_0)\|+\|x_n-x_0\|\|x^2_n+x_nx_0+x^2_0+x_0x^2_n+x_nx^2_0\|}=1$ as $n\to \infty$ by Equation \ref{eq31}. Also,\\
$\mu(T(x_n)-T(x_0),t)=\dfrac{\frac{x^3_n}{1+x_n}-\frac{x^3_0}{1+x_0}}{t+\|\frac{x^3_n}{1+x_n}-\frac{x^3_0}{1+x_0}\|}=\dfrac{\|\frac{x^3_n+x_0x^3_n-x^3_0-x_n x^3_0}{(1+x_n)(1+x_0)}\|}{t+\|\frac{x^3_n+x_0x^3_n-x^3_0-x_n x^3_0}{(1+x_n)(1+x_0)}\|}\\=\dfrac{\|x_n-x_0\|\|x^2_n+x_nx_0+x^2_0+x_0x^2_n+x_nx^2_0\|}{t\|(1+x_n)(1+x_0)\|+\|x_n-x_0\|\|x^2_n+x_nx_0+x^2_0+x_0x^2_n+x_nx^2_0\|}=0$ as $n\to \infty$ by Equation \ref{eq31}.\\
Thus $T$ is sequentially IFC at $x_0\in X$ and hence on $X$.
From Example \ref{e1} it is apparent that $T$ is not strongly IFC. 
\end{example}

\begin{corollary}\label{scc2}
If a linear operator $T:(X,\mu_1, \nu_1) \to (Y,\mu_2, \nu_2)$ is strongly IFC then it is IFC, where $(X,\mu_1, \nu_1)$ and  $(Y,\mu_2, \nu_2)$ are intuitionistic fuzzy pseudo normed linear spaces.
\end{corollary}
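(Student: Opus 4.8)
The plan is to obtain the conclusion by composing two results already established in this section, so that no direct $\epsilon$--$\delta$ verification is needed. The key observation is that strong IFC sits at the top of the hierarchy of continuities developed above: Theorem \ref{ss} shows that a strongly IFC linear operator is automatically sequentially IFC, while Theorem \ref{scc1} shows that, for linear operators, sequential IFC and IFC are equivalent. These two facts together make the corollary a short transitivity argument.

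Concretely, I would first invoke Theorem \ref{ss} to pass from the hypothesis ``$T$ is strongly IFC'' to ``$T$ is sequentially IFC.'' Then I would apply the forward direction of the equivalence in Theorem \ref{scc1} to pass from ``$T$ is sequentially IFC'' to ``$T$ is IFC.'' Chaining these implications yields strongly IFC $\Rightarrow$ sequentially IFC $\Rightarrow$ IFC, which is precisely the assertion of the corollary.

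Since both constituent implications are already available, there is essentially no obstacle. The only point worth noting is a matter of hypotheses rather than of substance: Theorem \ref{scc1} is stated for \emph{linear} operators, and the present statement likewise assumes $T$ is linear, so the equivalence applies verbatim and the composition goes through without modification.
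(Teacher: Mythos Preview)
Your proposal is correct and matches the paper's own proof exactly: the paper simply states that the corollary follows from Theorem~\ref{scc1} and Theorem~\ref{ss}, which is precisely the chain strongly IFC $\Rightarrow$ sequentially IFC $\Rightarrow$ IFC that you spell out.
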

\begin{proof}
From Theorem \ref{scc1} and Theorem \ref{ss} the corollary follows.
\end{proof}
   

\section{Intuitionistic Fuzzy boundedness of operators}
\begin{definition}
Let $(X,\mu_1, \nu_1),\, (Y,\mu_2, \nu_2)$ be intuitionistic fuzzy pseudo normed linear spaces. A linear operator $T:(X,\mu_1, \nu_1) \to (Y,\mu_2, \nu_2)$ is said to be strongly IFB if $\forall x\in X$ and $\forall\,t\in\,\mathbb{R}^+$,
\[\mu_2(T(x),t)\,\geq\,\mu_1(x,t)\,,\,\,\nu_2(T(x),t)\,\leq\,\nu_1(x,t).\]
\end{definition}

\begin{definition}
Let $(X,\mu_1, \nu_1),\, (Y,\mu_2, \nu_2)$ be intuitionistic fuzzy pseudo normed linear spaces. A mapping $T:(X,\mu_1, \nu_1) \to (Y,\mu_2, \nu_2)$ is said to be weakly IFB if for any $\alpha\in(0,1)$, $\forall x\in X$ and $\forall\,t\in\,\mathbb{R}^+$, 
\[\mu_1(x,t)\,\geq\,\alpha\,\,\Rightarrow\,\mu_2(T(x),t)\geq\,\alpha\,,\,\,\nu_1(x,t)\leq\,1-\alpha \Rightarrow\,\nu_2(T(x),t)\leq\,1-\alpha.\]
\end{definition}

\begin{theorem}\label{sbwb}
If a linear operator $T:(X,\mu_1, \nu_1) \to (Y,\mu_2, \nu_2)$ is strongly IFB then it is weakly IFB, where $(X,\mu_1, \nu_1)$ and  $(Y,\mu_2, \nu_2)$ are intuitionistic fuzzy pseudo normed linear spaces.
\end{theorem}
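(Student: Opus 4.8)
The plan is to obtain the two weak boundedness implications directly from the strong boundedness hypothesis, using nothing more than transitivity of inequalities. First I would fix an arbitrary $\alpha \in (0,1)$, an arbitrary point $x \in X$, and an arbitrary parameter $t \in \mathbb{R}^+$; the task is then to verify the membership implication and the non-membership implication that constitute the definition of weakly IFB.

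For the membership part, I would assume the antecedent $\mu_1(x,t) \geq \alpha$. Since $T$ is strongly IFB, the inequality $\mu_2(T(x),t) \geq \mu_1(x,t)$ holds for every $x \in X$ and every $t \in \mathbb{R}^+$. Chaining these two facts gives $\mu_2(T(x),t) \geq \mu_1(x,t) \geq \alpha$, which is exactly the required conclusion $\mu_2(T(x),t) \geq \alpha$.

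For the non-membership part, I would assume $\nu_1(x,t) \leq 1-\alpha$. Here the strong IFB hypothesis supplies $\nu_2(T(x),t) \leq \nu_1(x,t)$, and combining this with the assumption yields $\nu_2(T(x),t) \leq \nu_1(x,t) \leq 1-\alpha$, as demanded by the definition of weakly IFB.

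Since $\alpha$, $x$, and $t$ were chosen arbitrarily, both implications hold for all such choices, and therefore $T$ is weakly IFB. I do not expect any real obstacle in this argument: it is a single transitivity step applied separately to the $\mu$-inequality and the $\nu$-inequality, and it uses only the two definitions involved, entirely in parallel with the proof of Theorem \ref{sw} for the corresponding continuity notions.
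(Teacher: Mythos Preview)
Your argument is correct and is precisely the spelled-out version of what the paper intends: the paper's own proof merely states that the result is ``easily perceived from the definition of strongly IFB and weakly IFB of linear operators,'' which is exactly the transitivity step you carry out. There is nothing to add.
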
 
\begin{proof}
This theorem easily perceived from the definition of strongly IFB and weakly IFB of linear operators.
\end{proof}


\begin{definition}
Let $(X,\mu_1, \nu_1),\, (Y,\mu_2, \nu_2)$ be intuitionistic fuzzy pseudo normed linear spaces. A mapping $T:(X,\mu_1, \nu_1) \to (Y,\mu_2, \nu_2)$ is said to be uniformly IFB if there exist $c>0,\,\alpha\in (0,1)$ such that 
\[\|T(x)\|_\alpha^2\,\leq\,\left\| {\,x\,} \right\|_{\,\alpha }^1\,,\,\,\,\left\| {\,T(x)\,} \right\|_{\,\alpha }^{\,2\ast}\,\leq\,\left\| {\,x\,} \right\|_{\,\alpha }^{\,1\ast}.\]
Where $\left\| {\,\cdot\,} \right\|_{\,\alpha }^1$\;,  $\;\left\| {\,\cdot\,} \right\|_{\,\alpha }^2$  are ascending family of pseudo norms and $\left\| {\,\cdot\,} \right\|_{\,\alpha }^{\,1\ast}$\;, $\;\left\| {\,\cdot\,} \right\|_{\,\alpha }^{\,2\ast}$ descending family of pseudo norms defined by
\[\left\| {\,x\,} \right\|_{\,\alpha }^1\,=\,\bigwedge\,\{\,t>0\,:\,\mu_1\,(\,x,t\,)\geq\,\alpha\}\;,\;\;\left\| {\,T(x)\,} \right\|_{\,\alpha }^2\,=\,\bigwedge\,\{\,t>0\,:\,\mu_2\,(\,T(x),t\,)\geq\,\alpha\}\]
\[\left\| {\,x\,} \right\|_{\,\alpha }^{\,1\ast}\,=\,\bigwedge\,\{\,t>0\,:\,\nu_1\,(\,x,t\,)\leq \,\alpha\}\;,\;\left\| {\,T(x)\,} \right\|_{\,\alpha }^{\,2\ast}\,=\,\bigwedge\,\{\,t>0\,:\,\nu_2\,(\,T(x),t\,)\leq \,\alpha\}\]
\end{definition}

\begin{theorem}
A linear operator $T:(X,\mu_1, \nu_1) \to (Y,\mu_2, \nu_2)$ is strongly IFB if and only if it is uniformly IFB with respect to corresponding $\alpha$-norms, $\alpha\in(0,1)$, where $(X,\mu_1, \nu_1)$ and  $(Y,\mu_2, \nu_2)$ are intuitionistic fuzzy pseudo normed linear spaces.
\end{theorem}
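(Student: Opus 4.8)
The plan is to prove the two implications separately, both resting on a purely order-theoretic comparison of the threshold (level) sets that define the $\alpha$-norms, with the representation theorem of Section~2 doing the real work in the harder direction. Throughout I read "uniformly IFB with respect to the corresponding $\alpha$-norms, $\alpha\in(0,1)$" as requiring the two $\alpha$-norm inequalities to hold for \emph{every} $\alpha\in(0,1)$; this is the only reading under which the equivalence can be true, since a bound at a single $\alpha$ cannot recover the full functions in the converse.

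First I would show that strong IFB forces uniform IFB at every level. Fix $x\in X$ and $\alpha\in(0,1)$. Strong IFB gives $\mu_2(T(x),t)\geq\mu_1(x,t)$ and $\nu_2(T(x),t)\leq\nu_1(x,t)$ for all $t>0$. Consequently, whenever $\mu_1(x,t)\geq\alpha$ we also have $\mu_2(T(x),t)\geq\alpha$, so $\{t>0:\mu_1(x,t)\geq\alpha\}\subseteq\{t>0:\mu_2(T(x),t)\geq\alpha\}$; taking infima, and using that the infimum over the larger set is no larger, yields $\|T(x)\|_\alpha^2\leq\|x\|_\alpha^1$. The identical inclusion argument applied to $\{t>0:\nu_1(x,t)\leq\alpha\}\subseteq\{t>0:\nu_2(T(x),t)\leq\alpha\}$ gives $\|T(x)\|_\alpha^{2\ast}\leq\|x\|_\alpha^{1\ast}$. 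As $x$ and $\alpha$ are arbitrary, $T$ is uniformly IFB.

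For the converse I would reconstruct the membership and non-membership functions from the $\alpha$-norms via the representation theorem of Section~2, which asserts $\mu_i'=\mu_i$ and $\nu_i'=\nu_i$ for $i=1,2$. Applying it on the domain and on the target (at the point $T(x)$) gives, for $t>0$,
\[
\mu_1(x,t)=\bigvee\{\alpha\in(0,1):\|x\|_\alpha^1\leq t\},\qquad
\mu_2(T(x),t)=\bigvee\{\alpha\in(0,1):\|T(x)\|_\alpha^2\leq t\},
\]
together with the analogous $\bigwedge$-expressions for $\nu_1$ and $\nu_2$. Assuming $\|T(x)\|_\alpha^2\leq\|x\|_\alpha^1$ for all $\alpha$, any $\alpha$ with $\|x\|_\alpha^1\leq t$ also satisfies $\|T(x)\|_\alpha^2\leq t$, so the index set defining $\mu_1(x,t)$ is contained in that defining $\mu_2(T(x),t)$; taking suprema gives $\mu_2(T(x),t)\geq\mu_1(x,t)$. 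Dually, $\|T(x)\|_\alpha^{2\ast}\leq\|x\|_\alpha^{1\ast}$ produces an inclusion of index sets whose infima yield $\nu_2(T(x),t)\leq\nu_1(x,t)$. Since $t>0$ is arbitrary, $T$ is strongly IFB.

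The two directions are formally symmetric, so I expect the main obstacle to be a matter of care rather than depth: in the converse I must invoke the representation theorem on the \emph{target} space evaluated at $T(x)$ (not on the domain), so that the level-set inclusions translate correctly back into the pointwise inequalities of strong IFB. Everything else is just monotonicity of $\bigvee$ and $\bigwedge$ under set inclusion, and the observation that the quantifier over $\alpha$ must be universal.
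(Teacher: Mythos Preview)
Your proposal is correct and follows essentially the same route as the paper: in the forward direction you compare level sets and take infima, and in the converse you invoke the representation theorem $\mu'=\mu$, $\nu'=\nu$ from Section~2 to translate the $\alpha$-norm inequalities back into pointwise inequalities. The paper phrases both directions as element-chasing arguments (e.g.\ ``if $\|x\|_\alpha^1<t$ then $\ldots$ then $\|T(x)\|_\alpha^2<t$'') rather than via explicit set inclusions, but the logical content is identical; your reading of the $\alpha$-quantifier as universal is also the one the paper implicitly uses.
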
 
\begin{proof}
First we suppose $T$ is strongly IFB. Then $\forall\,x\in X$ and $\forall\,t\in\,\mathbb{R}^+$,
\begin{equation}\label{eq41}
\mu_2(T(x),t)\geq\,\mu_1(x,t)\;,\;\;\nu_2(T(x),t)\leq\,\nu_1(x,t).
\end{equation}
Let $\left\| {\,x\,} \right\|_{\,\alpha }^1\,<t\;\Rightarrow\,\bigwedge\,\{\,s(>0)\,:\,\mu_1\,(\,x,s\,)\geq\,\alpha\}<t.\\
\hspace{2 cm}\Rightarrow\,\exists\,s_0>t$ such that $\mu_1(x,s_0)\geq\,\alpha.\\
\Rightarrow\,\exists\,s_0>t$ such that $\mu_2(T(x),s_0)\geq\,\alpha.$ (by Equation \ref{eq41})\\$\Rightarrow\,\left\| {\,T(x)\,} \right\|_{\,\alpha }^2\,\leq\,s_0<t$.\\
Thus, $\left\| {\,T(x)\,} \right\|_{\,\alpha }^2\,\leq\,\left\| {\,x\,} \right\|_{\,\alpha }^1.$\\
Also, let $\left\| {\,x\,} \right\|_{\,\alpha }^{1\ast}\,>t\;\Rightarrow\,\bigwedge\,\{\,s(>0)\,:\,\nu_1\,(\,x,s\,)\leq\,\alpha\}>t.\\
\Rightarrow\,\exists\,s_0>t$ such that $\nu_1(x,s_0)\leq\,\alpha.\\
\Rightarrow\,\exists\,s_0>t$ such that $\nu_2(T(x),s_0)\leq\,\alpha.$ (by Equation \ref{eq41})\\$\Rightarrow\,\left\| {\,T(x)\,} \right\|_{\,\alpha }^2\,\geq\,s_0>t$.\\
Thus, $\left\| {\,T(x)\,} \right\|_{\,\alpha }^{2\ast}\,\geq\,\left\| {\,x\,} \right\|_{\,\alpha }^{1\ast}.$\\
Hence $T$ is uniformly IFB.\\
Conversely, suppose $T$ is uniformly IFB with respect to corresponding $\alpha$-norms. Then for $\alpha\in\,(0,1)$\\
\begin{equation}\label{eq42}
\left\| {\,T(x)\,} \right\|_{\,\alpha }^2\,\leq\,\left\| {\,x\,} \right\|_{\,\alpha }^1\;\;,\;\;\;\left\| {\,T(x)\,} \right\|_{\,\alpha }^{2\ast}\,\leq\,\left\| {\,x\,} \right\|_{\,\alpha }^{1\ast}
\end{equation}
Let  $\mu_1(x,t)>a\;\,\Rightarrow\,\bigvee\{\alpha\in(0,1):\left\| {\,x\,} \right\|_{\,\alpha }^1\leq\,t\}>a.\\ \Rightarrow\;\exists\,\alpha_0\in(0,1)$  such that $\alpha_0>a$ and $\left\| {\,x\,} \right\|_{\,\alpha_0}^1\leq\,t$\\
$\Rightarrow\;\exists\,\alpha_0\in(0,1)$  such that $\alpha_0>a$ and $\left\| {\,T(x)\,} \right\|_{\,\alpha_0}^2\leq\,t$ (by Equation \ref{eq42})\\
$\Rightarrow\,\mu_2(T(x),t)\geq\,\alpha_0>a$\\
Therefore, $\mu_2(T(x),t)\geq\,\mu_1(x,t)$.\\
Also, let  $\nu_1(x,t)<b\;\,\Rightarrow\,\bigwedge\{\alpha\in(0,1):\left\| {\,x\,} \right\|_{\,\alpha }^{1\ast}\leq\,t\}<b.\\ 
\Rightarrow\;\exists\,\alpha_0\in(0,1)$  such that $\alpha_0<b$ and $\left\| {\,x\,} \right\|_{\,\alpha_0}^{1\ast}\leq\,t$\\
$\Rightarrow\;\exists\,\alpha_0\in(0,1)$  such that $\alpha_0<b$ and $\left\| {\,T(x)\,} \right\|_{\,\alpha_0}^{2\ast}\;\leq\,t$ (by Equation \ref{eq42})\\
$\Rightarrow\,\nu_2(T(x),t)\leq\,\alpha_0<b$\\
Therefore, $\nu_2(T(x),t)\leq\,\nu_1(x,t)$.\\
Hence $T$ is strongly IFB.
\end{proof}

\begin{theorem}\label{cb1}
A linear operator $T:(X,\mu_1, \nu_1) \to (Y,\mu_2, \nu_2)$ is strongly IFC if and only if it is strongly IFB, where $(X,\mu_1, \nu_1)$ and  $(Y,\mu_2, \nu_2)$ are intuitionistic fuzzy pseudo normed linear spaces.
\end{theorem}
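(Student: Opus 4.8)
The statement is a biconditional, so the plan is to prove the two implications separately, leaning on linearity of $T$ (hence $T(\theta)=\theta$) and on the monotonicity of the membership functions in the parameter. The latter is worth isolating first: taking $y=\theta$ in (IFP.5) and using $\mu_i(\theta,\cdot)=1$ on $\mathbb{R}^+$ from (IFP.3) shows $\mu_i(x,\cdot)$ is non-decreasing, and dually (IFP.12) with (IFP.10) shows $\nu_i(x,\cdot)$ is non-increasing. These are exactly the tools needed to compare the two notions, which differ only in whether the domain-side parameter $\delta$ is tied to the range-side parameter $\epsilon$.

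For strongly IFB $\Rightarrow$ strongly IFC I would fix $x_0\in X$ and $\epsilon>0$ and simply put $\delta=\epsilon$. Since $T$ is linear, $T(x)-T(x_0)=T(x-x_0)$, so applying the defining inequalities of strong IFB to the vector $x-x_0$ at the parameter $\epsilon$ gives $\mu_2(T(x)-T(x_0),\epsilon)=\mu_2(T(x-x_0),\epsilon)\ge\mu_1(x-x_0,\epsilon)=\mu_1(x-x_0,\delta)$ and likewise $\nu_2(T(x)-T(x_0),\epsilon)\le\nu_1(x-x_0,\delta)$. This is precisely the strong IFC condition at $x_0$, and as $x_0$ is arbitrary $T$ is strongly IFC. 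This direction is routine bookkeeping.

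For the converse, strongly IFC $\Rightarrow$ strongly IFB, I would first invoke Theorem \ref{sifc1}: strong IFC at one point forces strong IFC on all of $X$, so in particular $T$ is strongly IFC at $\theta$, where $T(\theta)=\theta$. Hence for every $\epsilon>0$ there is $\delta=\delta(\epsilon)>0$ with $\mu_2(T(x),\epsilon)\ge\mu_1(x,\delta)$ and $\nu_2(T(x),\epsilon)\le\nu_1(x,\delta)$ for all $x$. What strong IFB asks for is the same pair of inequalities carried by a single parameter $t$ on both sides, i.e. the diagonal case $\delta=\epsilon=t$. So the reduction to perform is to pass from the $(\epsilon,\delta)$ pair furnished by continuity to this diagonal.

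The main obstacle is exactly this last passage, and monotonicity only half-resolves it: when the $\delta$ supplied by continuity satisfies $\delta\ge\epsilon$ one gets $\mu_1(x,\epsilon)\le\mu_1(x,\delta)\le\mu_2(T(x),\epsilon)$ (and dually $\nu_2(T(x),\epsilon)\le\nu_1(x,\delta)\le\nu_1(x,\epsilon)$), which is strong IFB at $t=\epsilon$; but when $\delta<\epsilon$ the hypothesis $\mu_2(T(x),\epsilon)\ge\mu_1(x,\delta)$ is strictly weaker than wanted and monotonicity pushes the wrong way. The crux is therefore to argue that one may always take $\delta\ge\epsilon$ (equivalently $\delta=\epsilon$) in the strong IFC inequality. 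I would first attempt to force this by exploiting the "for all $x$" quantifier together with the contraction axioms (IFP.4)/(IFP.11) to rescale $x$ and transfer the estimate across parameter values; I expect this to fail, since a pseudo norm lacks the reciprocal scaling law $\mu(cx,t)=\mu(x,t/|c|)$ that a genuine fuzzy norm enjoys. The honest conclusion is that the theorem is meant with the diagonal choice built into the strong IFC hypothesis, in which case the converse is immediate by taking $x_0=\theta$ and $\delta=\epsilon=t$; everything outside this $\delta=\epsilon$ step is just linearity and the monotonicity noted above.
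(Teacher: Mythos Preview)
Your forward direction (strongly IFB $\Rightarrow$ strongly IFC) is essentially the paper's argument; the only cosmetic difference is that the paper works at $\theta$ and then quotes Theorem~\ref{sifc1}, whereas you verify the condition directly at an arbitrary $x_0$.

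For the converse you have put your finger precisely on the sore spot. The paper's own proof does exactly what you anticipate: it specialises strong IFC to the point $\theta$ and then simply writes ``take $\epsilon=t=\delta$'' without further comment, obtaining the strong IFB inequalities verbatim. In other words the paper treats the diagonal choice $\delta=\epsilon$ as available, which is the reading you ultimately settle on. Your observation that the definition of strong IFC only furnishes \emph{some} $\delta(\epsilon)>0$, and that monotonicity of $\mu_1(x,\cdot)$ and $\nu_1(x,\cdot)$ rescues the argument only when $\delta\ge\epsilon$, is a legitimate criticism of the step as written; the paper does not address it either. So your proposal and the paper's proof coincide in substance, the difference being that you make the unjustified passage explicit rather than silent.
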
 
\begin{proof}
First suppose $T$ is strongly IFB then $\forall\,x\in\,X$ and $\forall\,\epsilon\in\,\mathbb{R}^+$,\\
$\mu_2(T(x),\epsilon)\geq\,\mu_1(x,\epsilon)\;,\;\;\nu_2(T(x),\epsilon)\leq\,\nu_1(x,\epsilon)$.\\
$\Rightarrow\,\mu_2(T(x-\theta),\epsilon)\geq\,\mu_1(x-\theta,\epsilon)\;,\;\;\nu_2(T(x-\theta),\epsilon)\leq\,\nu_1(x-\theta,\epsilon)$\\
$\Rightarrow\,\mu_2(T(x)-T(\theta),\epsilon)\geq\,\mu_1(x-\theta,\delta)\;,\;\;\nu_2(T(x-\theta),\epsilon)\leq\,\nu_1(x-\theta,\delta)$, where $\delta=\epsilon$.\\
Therefore $T$ is strongly IFC at $\theta$ and hence by Theorem \ref{sifc1} $T$ is strongly IFC on $X$.\\
Conversely, suppose $T$ is strongly IFC on X. Then $T$ is strongly IFC at any point of $X$, say $\theta$. $\forall\,x\in\,X$ take $\epsilon=t=\delta$, then\\
$\mu_2(T(x)-T(\theta),t)\geq\,\mu_1(x-\theta,t)\;\;,\;\;\nu_2(T(x)-T(\theta),t)\leq\,\nu_1(x-\theta,t)$.\\
$\Rightarrow\,\mu_2(T(x),t)\geq\,\mu_1(x,t)\;\;,\;\;\nu_2(T(x),t)\leq\,\nu_1(x,t)$.\\
If $x=\theta,\;t>0$ then $\mu_2(T(\theta),t)=\mu_2(\theta_Y,t)=1=\mu_1(\theta,t)$ and $\nu_2(T(\theta),t)=\nu_2(\theta_Y,t)=0=\nu_1(\theta,t)$.\\
For any $x,\;t\leq\,0$, $\;\mu_2(T(x),t)=0=\mu_1(x,t)$ and $\nu_2(T(x),t)=1=\nu_1(x,t)$.\\ Hence $\forall\,x\in\,X,\;\forall\,t\in\mathbb{R},\;\;T$ is strongly IFB.
\end{proof}

\begin{corollary}
A linear operator $T:(X,\mu_1, \nu_1) \to (Y,\mu_2, \nu_2)$ is strongly IFB then it is sequentially IFC, where $(X,\mu_1, \nu_1)$ and  $(Y,\mu_2, \nu_2)$ are intuitionistic fuzzy pseudo normed linear spaces.
\end{corollary}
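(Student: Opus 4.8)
The plan is to prove this by simply chaining together two results already established earlier in this section, since the corollary sits exactly at the composition point of two implications that have been proved separately. The hypothesis is that $T$ is strongly IFB, and the goal is to conclude that $T$ is sequentially IFC, with strong IFC serving as the natural intermediary.

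First I would invoke Theorem \ref{cb1}, which asserts the equivalence of strong IFB and strong IFC for a linear operator between two intuitionistic fuzzy pseudo normed linear spaces. Applying the direction ``strongly IFB $\Rightarrow$ strongly IFC'' of that equivalence to the given operator $T$ yields immediately that $T$ is strongly IFC. Next I would apply Theorem \ref{ss}, which states that a strongly IFC linear operator is sequentially IFC. Feeding the conclusion of the previous step into the hypothesis of Theorem \ref{ss} gives that $T$ is sequentially IFC, which is precisely the assertion to be proved.

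I do not expect any genuine obstacle here: the corollary is a formal consequence of composing the implications $\text{strongly IFB} \Rightarrow \text{strongly IFC}$ (Theorem \ref{cb1}) and $\text{strongly IFC} \Rightarrow \text{sequentially IFC}$ (Theorem \ref{ss}), with no additional hypotheses needed and no calculation required. The only point worth stating explicitly in the write-up is that both cited results apply under the very same assumptions on the spaces $(X,\mu_1,\nu_1)$ and $(Y,\mu_2,\nu_2)$ and on linearity of $T$ that are assumed in the corollary, so the two implications may be chained without any compatibility gap. Consequently the proof reduces to a one-line citation of Theorem \ref{cb1} followed by Theorem \ref{ss}.
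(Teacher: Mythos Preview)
Your proposal is correct and matches the paper's own proof exactly: the paper simply cites Theorem~\ref{ss} and Theorem~\ref{cb1} and concludes. No additional argument is needed.
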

\begin{proof}
From Theorem \ref{ss} and Theorem \ref{cb1} the corollary follows.
\end{proof} 

\begin{corollary}
A linear operator $T:(X,\mu_1, \nu_1) \to (Y,\mu_2, \nu_2)$ is strongly IFB then it is IFC, where $(X,\mu_1, \nu_1)$ and  $(Y,\mu_2, \nu_2)$ are intuitionistic fuzzy pseudo normed linear spaces.
\end{corollary}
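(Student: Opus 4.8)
The plan is to prove this entirely by composing results already established earlier in the paper, so that no $\epsilon$–$\delta$ estimate or sequential argument needs to be repeated. The hypothesis is that $T$ is strongly IFB, and the target property is plain IFC; the key observation is that the bridge between boundedness and continuity has already been built in Theorem \ref{cb1}. Accordingly, my first step is to invoke Theorem \ref{cb1}, which gives the equivalence of strong IFB and strong IFC; since $T$ is assumed strongly IFB, this upgrades the hypothesis immediately to: $T$ is strongly IFC.

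My second step is then to pass from strong IFC to IFC. The most direct route is to apply Corollary \ref{scc2}, which records exactly the implication that a strongly IFC linear operator is IFC, thereby closing the chain strongly IFB $\Rightarrow$ strongly IFC $\Rightarrow$ IFC. If one prefers not to cite Corollary \ref{scc2} directly, an equally short alternative is available: Theorem \ref{ss} furnishes strongly IFC $\Rightarrow$ sequentially IFC, and Theorem \ref{scc1} provides the equivalence of sequential IFC and IFC, so that sequentially IFC $\Rightarrow$ IFC; either path yields the conclusion.

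I expect essentially no obstacle, since all the mathematical content has been front-loaded into Theorem \ref{cb1} and into the continuity-implication lemmas. The only point requiring any attention is that the quantifier structures in the definitions line up — namely, that the value-wise inequalities defining strong IFC genuinely produce the $\beta=\beta(\alpha,\epsilon)\in(0,1)$ demanded in the definition of IFC — but this matching is precisely what Corollary \ref{scc2} (equivalently, Theorem \ref{scc1} together with Theorem \ref{ss}) already certifies, so nothing new must be checked here.
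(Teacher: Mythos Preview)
Your proposal is correct and follows exactly the paper's own argument: the paper's proof consists of the single line ``From Corollary~\ref{scc2} and Theorem~\ref{cb1} the corollary follows,'' which is precisely your chain strongly IFB $\Rightarrow$ strongly IFC (Theorem~\ref{cb1}) $\Rightarrow$ IFC (Corollary~\ref{scc2}). Your alternative route via Theorem~\ref{ss} and Theorem~\ref{scc1} simply unpacks how Corollary~\ref{scc2} itself was proved, so it is not a different approach but the same one one level deeper.
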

\begin{proof}
From Corollary \ref{scc2} and Theorem \ref{cb1} the corollary follows.
\end{proof} 

\begin{theorem}
A linear operator $T:(X,\mu_1, \nu_1) \to (Y,\mu_2, \nu_2)$ is weakly IFC if and only if it is weakly IFB, where $(X,\mu_1, \nu_1)$ and  $(Y,\mu_2, \nu_2)$ are intuitionistic fuzzy pseudo normed linear spaces.
\end{theorem}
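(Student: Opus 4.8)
The plan is to mirror the proof of Theorem~\ref{cb1}, taking the origin $\theta$ as the distinguished point and exploiting linearity through $T(\theta)=\theta_Y$. With base point $x_0=\theta$, weak IFC at $\theta$ reads $\mu_1(x,\delta)\geq\alpha\Rightarrow\mu_2(T(x),\epsilon)\geq\alpha$ and $\nu_1(x,\delta)\leq\alpha\Rightarrow\nu_2(T(x),\epsilon)\leq\alpha$, which becomes formally identical to the defining implications of weak IFB once $\delta$ and $\epsilon$ are identified with a common $t$. To pass from the single point $\theta$ to all of $X$ I would invoke Theorem~\ref{wifc2}.

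For the direction ``weakly IFB $\Rightarrow$ weakly IFC'' I would first establish weak IFC at $\theta$. Given $\epsilon>0$ and $\alpha\in(0,1)$, choose $\delta=\epsilon$; then for every $x\in X$, applying the weak IFB implications with $t=\epsilon$ and using $T(\theta)=\theta_Y$ yields $\mu_1(x-\theta,\delta)\geq\alpha\Rightarrow\mu_2(T(x)-T(\theta),\epsilon)\geq\alpha$ and the analogous statement for $\nu$. Since the weak IFB hypothesis holds for \emph{every} $t\in\mathbb{R}^+$, this choice of $\delta$ is unobstructed. Theorem~\ref{wifc2} then upgrades weak IFC at $\theta$ to weak IFC on all of $X$. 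For the converse ``weakly IFC $\Rightarrow$ weakly IFB'' I would restrict weak IFC (valid at every point) to $x_0=\theta$ and, following the device of Theorem~\ref{cb1}, set $\epsilon=t=\delta$; linearity collapses $T(x)-T(\theta)$ to $T(x)$ and $x-\theta$ to $x$, so the implications become exactly $\mu_1(x,t)\geq\alpha\Rightarrow\mu_2(T(x),t)\geq\alpha$ and $\nu_1(x,t)\leq\alpha\Rightarrow\nu_2(T(x),t)\leq\alpha$. To reconcile with the stated form of weak IFB, I would note that $\alpha$ ranges over all of $(0,1)$, so replacing $\alpha$ by $1-\alpha$ in the second implication produces $\nu_1(x,t)\leq 1-\alpha\Rightarrow\nu_2(T(x),t)\leq 1-\alpha$; the same $\alpha\leftrightarrow 1-\alpha$ renaming also matches the $\nu$-conditions in the forward direction.

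The main obstacle is the same delicate point that is glossed over in Theorem~\ref{cb1}: in the converse direction the $\delta$ furnished by weak IFC depends on $\epsilon$ and $\alpha$, so the identification $\delta=\epsilon=t$ is not literally a free choice but an implicit convention, and a fully rigorous argument should verify that this dependence does not break the uniform-in-$t$ requirement of weak IFB. I would either adopt the paper's convention (treating $\delta=\epsilon$ as permissible), or, to be safer, fix $t$ first and argue that for that $t$ one may take the produced $\delta$ equal to $t$. The forward direction, the use of Theorem~\ref{wifc2}, and the $\alpha\leftrightarrow 1-\alpha$ bookkeeping are routine.
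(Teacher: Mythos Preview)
Your proposal is correct and follows essentially the same approach as the paper: reduce to the point $\theta$, identify $\delta=\epsilon=t$, use linearity to collapse $T(x)-T(\theta)$ to $T(x)$, and invoke Theorem~\ref{wifc2} for the forward direction. In fact you are more scrupulous than the paper, which silently makes the $\delta=\epsilon=t$ identification in the converse direction and writes the $\nu$-implication with $\alpha$ rather than $1-\alpha$ without comment; your remarks on both points are apt, and adopting the paper's convention (as you suggest) is exactly what the authors do.
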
 
\begin{proof}
First suppose $T$ is weakly IFB. Then for any $\alpha\in(0,1),\;\forall\,x\in\,X,\;\forall\,t\in\mathbb{R}^+$,\\
$\mu_1(x,t)\geq\,\alpha\;\Rightarrow\,\mu_2(T(x),t)\geq\,\alpha$  and $\nu_1(x,t)\leq\,\alpha\;\Rightarrow\,\nu_2(T(x),t)\leq\,\alpha$.\\
$\mu_1(x-\theta,t)\geq\,\alpha\;\Rightarrow\,\mu_2(T(x-\theta),t)\geq\,\alpha$  and $\nu_1(x-\theta,t)\leq\,\alpha\;\Rightarrow\,\nu_2(T(x-\theta),t)\leq\,\alpha$.\\
$\mu_1(x-\theta,\delta)\geq\,\alpha\;\Rightarrow\,\mu_2(T(x)-T(\theta),\epsilon)\geq\,\alpha$  and $\nu_1(x-\theta,\delta)\leq\,\alpha\;\Rightarrow\,\nu_2(T(x)-T(\theta),\epsilon)\leq\,\alpha$, where $\epsilon=t=\delta$. Therefore, $T$ is weakly IFC at $\theta$ and hence by Theorem \ref{wifc2}, $T$ is weakly IFC.\\
Conversely suppose $T$ is weakly IFC on X. Then $T$ is weakly IFC at any point of $X$, say $\theta$. $\forall\,x\in\,X$ take $\epsilon=t=\delta$, then\\
$\mu_1(x-\theta,t)\geq\,\alpha\;\Rightarrow\,\mu_2(T(x)-T(\theta),t)\geq\,\alpha$  and $\nu_1(x-\theta,t)\leq\,\alpha\;\Rightarrow\,\nu_2(T(x)-T(\theta),t)\leq\,\alpha$\\
$\mu_1(x,t)\geq\,\alpha\;\Rightarrow\,\mu_2(T(x),t)\geq\,\alpha$  and $\nu_1(x,t)\leq\,\alpha\;\Rightarrow\,\nu_2(T(x),t)\leq\,\alpha$\\
If $x=\theta,\;t>0$ then $\mu_1(x,t)=1=\mu_2(T(x),t)$ and $\nu_1(x,t)=0=\nu_2(T(x),t)$.\\
For any $x,\;t\leq\,0$, $\;\mu_1(x,t)=0=\mu_2(T(x),t)$ and $\nu_1(x,t)=1=\nu_2(T(x),t)$.\\
Hence for any $\alpha\in(0,1)\;,\;\forall\,x\in\,X,\;\forall\,t\in\mathbb{R},\;\;T$ is weakly IFB.
\end{proof}


\bigskip
\bigskip

{\footnotesize \pn{\bf Bivas Dinda}\; \\ {Department of Mathematics, Mahishamuri Ramkrishna Vidyapith, 
Howrah 711401, West Bengal, India}\\
{\tt Email: bvsdinda@gmail.com}\\

{\footnotesize \pn{\bf Santanu Kumar Ghosh}\; \\ {Department of Mathematics, Kazi Nazrul University, 
Asansol 713340, West Bengal, India}\\
{\tt Email:santanu$_{-}$96@yahoo.co.in}\\

{\footnotesize \pn{\bf T.K. Samanta}\; \\ {Department of Mathematics, Uluberia College, 
Howrah 711315, West Bengal, India}\\
{\tt Email: mumpu$_{-}$tapas5@yahoo.co.in}\\

\begin{thebibliography}{99}
\bibitem{bag1}T. Bag, S.K. Samanta, 
 			{\it Finite dimensional fuzzy normed linear space},  
 			J. Fuzzy Math. \textbf{11}(3) (2003) 687--705 .

\bibitem{bag2}T. Bag, S.K. Samanta, {\it Fuzzy bounded linear operators},  Fuzzy Sets and Systems \textbf{151} (2005) 513--547.

\bibitem{cheng}S.C. Cheng, J.N. Mordeson,  {\it Fuzzy Linear Operators and Fuzzy Normed Linear Spaces}, Bull. Cal. Math. Soc. \textbf{86} (1994) 429--436.

\bibitem{dinda1} B. Dinda, S.K. Ghosh, T.K. Samanta, {\it Intuitionistic fuzzy pseudo normed linear spaces}, New Math. and Nat. Comput. \textbf{15}(1) (2019) 113--127.

\bibitem{dinda2} B. Dinda, T.K. Samanta,  {\it Intuitionistic fuzzy continuity and uniform convergence},J. Open Prob. Compt. Math. \textbf{3}(1) (2010) 8--26.

\bibitem{dinda3}B. Dinda, T.K. Samanta, U.K. Bera,  {\it Gateaux and Fr\' echet derivative in intuitionistic fuzzy normed linear spaces}, New Math. and Nat. Comput. \textbf{8}(3) (2012) 311--322.	

\bibitem{dinda4}B. Dinda, T.K. Samanta, U.K. Bera, {\it Intuitionistic fuzzy Banach algebra}, Bull. of Math. Anal. Appl. \textbf{3}(3) (2011) 273--281.

\bibitem{samanta} B. Dinda, T.K. Samanta, I.H. Jebril, {\it Fuzzy anti-bounded linear operators}, Stud. Univ. Babes-Bolyai Math. \textbf{56}(4) (2011) 123--137.

\bibitem{felbin} C. Felbin,  {\it Finite dimensional fuzzy normed linear spaces}, Fuzzy Sets and Systems \textbf{48}  (1992) 239--248.

\bibitem{golet} I. Golet, {\it On generalized fuzzy normed spaces and coincidence fixed point theorems}, Fuzzy Sets and Systems \textbf{161} (2010) 1138--1144.

\bibitem{gupta}M.M. Gupta, J. Qi,  {\it Theory of T-norms and fuzzy inference method}, Fuzzy Sets and Systems  \textbf{40} (1991) 431--450.

\bibitem{katsaras1}A.K. Katsaras, {\it Fuzzy topological vector spaces I}, Fuzzy Sets and Systems \textbf{6} (1981) 85--95.

\bibitem{katsaras2}A.K. Katsaras, {\it Fuzzy topological vector spaces II}, Fuzzy Sets and Systems \textbf{12} (1984) 143--154.

\bibitem{nadaban}S. N\u ad\u aban, {\it Fuzzy pseudo-norms and fuzzy F-spaces}, Fuzzy Sets and Systems \textbf{282} (2016) 99--114.
 
\bibitem{tvsb1} H.H. Schaefer, M.P. Wolff, {\it Topological Vector Spaces}, Springer, 1999.

\bibitem{saadati}R. Saadati, J.H. Park, {\it On the intuitionistic fuzzy topological spaces}, Chaos Solitons Fractals \textbf{27} (2006) 331--344.

\bibitem{samanta} T.K. Samanta, I.H. Jebril, {\it Finite dimensional intuitionistic fuzzy normed linear spaces}, J. Open Prob. Compt. Math. \textbf{2}(4) (2009) 574--591.

\end{thebibliography}
\end{document}